\title{On elliptic curves with $p$-isogenies over quadratic fields}
\author{Philippe Michaud-Jacobs}
\newcommand\notsotiny{\@setfontsize\notsotiny\@vipt\@viipt}
\newtheorem{theorem}{Theorem}[section]
\newtheorem*{theorem*}{Theorem}
\newtheorem {corollary}[theorem]{Corollary}
\newtheorem {lemma}[theorem]{Lemma}
\newtheorem {proposition}[theorem]{Proposition}
\theoremstyle{definition}
\theoremstyle{remark}
\newtheorem{remark}[theorem]{Remark}
\newtheorem*{remark*}{Remark}
\apptocmd{\sloppy}{\hbadness 10000\relax}{}{}
\newcommand{\Mod}[1]{\ (\mathrm{mod}\ #1)}
\providecommand{\Q}{\mathbb{Q}}
\providecommand{\Z}{\mathbb{Z}}
\newcommand{\Gal}{\mathrm{Gal}}
\renewcommand{\arraystretch}{2.0}
\newcommand{\Addresses}{{
  \bigskip
  \footnotesize

 \textsc{Mathematics Institute, University of Warwick, CV4 7AL, United Kingdom}\par\nopagebreak
  \textit{E-mail address}: \texttt{p.rodgers@warwick.ac.uk}
}}
\let\svthefootnote\thefootnote
\newcommand\freefootnote[1]{%
  \let\thefootnote\relax%
  \footnotetext{#1}%
  \let\thefootnote\svthefootnote%
}
\date{\vspace{-3ex}}
\begin{document}

\maketitle

\begin{abstract}

Let $K$ be a number field. For which primes $p$ does there exist an elliptic curve $E / K$ admitting a $K$-rational $p$-isogeny? Although we have an answer to this question over the rationals, extending this to other number fields is a fundamental open problem in number theory. In this paper, we study this question in the case that $K$ is a quadratic field, subject to the assumption that $E$ is semistable at the primes of $K$ above $p$. We prove results both for families of quadratic fields and for specific quadratic fields.
\end{abstract}


\section{Introduction}


One of the most important aspects of the study of elliptic curves is the investigation of the maps between them, and in particular their isogenies. Isogenies of prime degree are perhaps the most intriguing: a complete understanding would provide much insight into the arithmetic of elliptic curves, yet we still cannot answer some of the most basic questions about them. In this paper, we will investigate isogenies of prime degree over quadratic fields. \freefootnote{\emph{Date}: \date{\today}.}
\freefootnote{\emph{Keywords}: Elliptic curve, isogeny, irreducibility, Galois representation, quadratic field, modular curve.}
\freefootnote{\emph{MSC2020}: 11F80, 11G05, 11G18.}
\freefootnote{The author is supported by an EPSRC studentship and has previously used the name Philippe Michaud-Rodgers.}
Given an elliptic curve $E$ defined over a number field $K$, and a prime $p$, we say that $E$ \emph{admits a $K$-rational $p$-isogeny} if it admits an isogeny, $\varphi$, of degree $p$, satisfying $\varphi^\sigma = \varphi$ for any $\sigma \in \mathrm{Gal}(\overline{K} / K)$. Equivalent formulations are that $E$ has a $K$-rational subgroup of order $p$, or that the mod~$p$ Galois representation of $E$ is reducible. We simply call an isogeny \emph{rational} if it is $\Q$-rational. The key question we would like to answer is the following: given a number field $K$, for which primes $p$ does there exist an elliptic curve $E / K$ admitting a $K$-rational $p$-isogeny?
Thanks to the work of Mazur, we have a complete answer to this question over the rationals. 
\begin{theorem*}[Mazur, {\citep[Theorem~1]{ratisog}}]\label{maz} Let $p$ be a prime such that there exists an elliptic curve $E / \Q$ that admits a rational $p$-isogeny. Then \[ p  \in \{2,3,5,7,11,13,17,19,37,43, 67, 163 \}. \]
\end{theorem*}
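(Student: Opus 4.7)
The plan is to reformulate the question in terms of rational points on modular curves. An elliptic curve $E/\Q$ admits a rational $p$-isogeny if and only if the pair $(E, C)$, where $C$ is the $p$-subgroup generated by the kernel, corresponds to a non-cuspidal $\Q$-rational point on the modular curve $X_0(p)$. So the theorem reduces to the statement that for $p \notin \{2,3,5,7,11,13,17,19,37,43,67,163\}$, the set $X_0(p)(\Q)$ consists only of the two cusps (and in particular, no elliptic curve gives a point). This splits naturally into a ``small $p$'' verification and a ``large $p$'' finiteness argument.

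For the small primes, I would proceed case-by-case by genus. When $g(X_0(p)) = 0$, i.e.\ $p \in \{2,3,5,7,13\}$, the curve is isomorphic to $\mathbb{P}^1_\Q$ (one exhibits a degree-$1$ divisor using the cusps), so infinitely many non-cuspidal rational points exist automatically. For $p \in \{11, 17, 19\}$ the curve $X_0(p)$ has genus $1$ and I would compute its Mordell--Weil group explicitly: in each case $X_0(p)(\Q)$ is finite, and the non-cuspidal points either are CM points or (for $p=11$) an isogeny corresponding to a known elliptic curve. The primes $p = 37, 43, 67, 163$ contribute non-cuspidal points coming from elliptic curves with complex multiplication by the imaginary quadratic orders of class number $1$ whose discriminant involves $p$; these are accounted for by inspection of the list of rational CM $j$-invariants.

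For large $p$, the heart of the argument is Mazur's theory of the Eisenstein ideal. The strategy is: (i) introduce the Hecke algebra $\mathbb{T}$ acting on the Jacobian $J_0(p)$ and the Eisenstein ideal $\mathfrak{I} \subset \mathbb{T}$ generated by $T_\ell - \ell - 1$; (ii) define the Eisenstein quotient $\tilde J := J_0(p)/\mathfrak{I}\, J_0(p)$; (iii) prove that $\tilde J(\Q)$ is finite, generated by the class of the cuspidal divisor $(0) - (\infty)$, whose order is the numerator of $(p-1)/12$; (iv) show that the composition $X_0(p) \hookrightarrow J_0(p) \twoheadrightarrow \tilde J$ (sending the cusp $\infty$ to $0$) is a formal immersion at $\infty$ modulo a suitable small prime $\ell \neq p$. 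A non-cuspidal rational point $x \in X_0(p)(\Q)$ would have its image in $\tilde J(\Q)$ equal to a torsion point, and reducing mod $\ell$ the formal immersion property forces $x$ itself to equal a cusp, a contradiction.

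The main obstacle is step (iii): establishing that the Eisenstein quotient has Mordell--Weil rank $0$ over $\Q$. This is the deepest input of the argument and requires a detailed analysis of the Eisenstein ideal, including a description of $\mathbb{T}/\mathfrak{I}$, the use of the geometry of $J_0(p)$ at the prime $p$ (Néron model, component group, and the fact that the cuspidal subgroup generates the torsion annihilated by $\mathfrak{I}$), and a careful descent argument controlling $\tilde J(\Q)$ via Galois cohomology of the $\mathfrak{I}$-adic Tate module. Everything else — the formal immersion criterion and the reduction-mod-$\ell$ argument — is then a relatively clean geometric packaging of this rank~$0$ result.
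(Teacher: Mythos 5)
First, a point of orientation: the paper does not prove this statement at all --- it is quoted verbatim from Mazur (\emph{Rational isogenies of prime degree}) and used as a black box, so there is no internal proof to compare against. What you have written is a reconstruction of Mazur's own argument, and it is worth assessing on those terms.

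Your reduction to $X_0(p)(\Q)$ and your identification of the Eisenstein quotient and its rank-$0$ property as the deep input are both right. But the large-$p$ argument as you describe it has a genuine gap. The formal-immersion step only yields a contradiction if the non-cuspidal point $x=(E,C)$ \emph{reduces to a cusp} modulo the auxiliary prime $\ell$, and for $X_0(p)$ (unlike for torsion points on $X_1(p)$) this is not automatic: it happens exactly when $E$ has potentially multiplicative reduction at $\ell$. If $E$ has potentially good reduction at every prime, $x$ never reduces to a cusp and the argument you sketch never gets started. Mazur handles this branch by a separate analysis of the isogeny character $\lambda$: one shows $\lambda^{12}=\chi_p^{s}$ with $s\in\{0,4,6,8,12\}$, and then (after applying $w_p$ to reduce the cases) either bounds $p$ via the resultant of the characteristic polynomial of Frobenius at a small prime with $X^{12}-1$, or, in the case $s=6$, deduces that $\Q(\sqrt{-p})$ has class number one --- which is where $43$, $67$, $163$ enter. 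This isogeny-character analysis is not an optional refinement; it is the part of the proof that the present paper (following Momose, David, Freitas--Siksek) generalises to quadratic fields via isogeny signatures, Proposition \ref{Keyprop}, and the split into potentially good versus potentially multiplicative reduction. A smaller factual slip: the rational points on $X_0(37)$ do not come from CM orders of class number one (there is no such order of discriminant involving $37$); one of them corresponds to a non-CM curve. Since the theorem as stated is only the forward implication, exhibiting points for the listed primes is not actually required, but the attribution is still incorrect.
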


Although this theorem was proven over 40 years ago, it has not been possible to obtain an analogous result for even a single other number field. Perhaps the most likely candidate for a similar result is a quadratic field of small discriminant. Recent work \citep[p.~5]{Banwait} has shown that this is possible assuming the generalised Riemann hypothesis, although removing this assumption seems to be out of reach at this time.

Apart from the intrinsic interest of studying isogenies of elliptic curves, perhaps one of the most spectacular consequences of Mazur's theorem is the role it plays in the proof of Fermat's Last Theorem. More generally, in the `modular approach' to studying Diophantine equations, one associates a Frey elliptic curve to a putative solution of a Diophantine equation, and applies Ribet's level-lowering theorem \citep[Theorem~1.1]{rib} to relate this Frey curve to a modular form. A key hypothesis in applying Ribet's theorem at a given prime $p$ is the non-existence of a rational $p$-isogeny. 

More recently, the modular approach has been applied over various number fields, most commonly over real quadratic fields. See \citep{realquad}, \citep{AnniSiksek}, and \citep{shifted} for a sample of papers that do this. In these examples, the Frey curve one constructs is defined over a number field, $K$, and in order to apply an analogue of Ribet's level-lowering theorem \citep[Theorem~7]{asym}, it is again necessary, for a given prime $p$, to rule out the existence of a $K$-rational $p$-isogeny. Since there is no analogue of Mazur's theorem over number fields, various methods have been used to achieve this. A further assumption in this analogue of Ribet's theorem is that the elliptic curve one is working with should be semistable at all primes of $K$ above p, which one may view as a natural condition in its own right. With the assumption of semistability at the primes of $K$ above $p$, it is possible to obtain results akin to Mazur's theorem, both for families of quadratic fields and for specific quadratic fields. Our main result is the following.

\begingroup
\renewcommand\thetheorem{1}
\begin{theorem}\label{Thm1}
Let $K$ be a real quadratic field and let $\epsilon$ be a fundamental unit of $K$. Let $n$ be the exponent of the class group of $K$ and assume $n \leq 3$. Let $p$ be a prime such that there exists an elliptic curve $E / K$ which admits a $K$-rational $p$-isogeny and is semistable at all primes of $K$ above $p$. Then  either 
\begin{itemize}
\item $p$ ramifies in $K$; or
\item $p \in \{2,3,5,7,11,13,17,19,37 \}$; or
\item $p$ splits in $K$ and $p \mid \mathrm{Norm}_{K / \Q}(\epsilon^{12}-1)$.
\end{itemize}
\end{theorem}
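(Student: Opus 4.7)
My plan is to work with the isogeny character attached to the $K$-rational $p$-isogeny and to analyse its behaviour using class field theory. Let $\lambda \colon G_K \to \mathbb{F}_p^\times$ be the isogeny character, so that $\rho_{E,p}$ has $\lambda$ and $\chi_p\lambda^{-1}$ on the diagonal (with $\chi_p$ the mod-$p$ cyclotomic character). I pass immediately to $\theta := \lambda^{12}$; raising to the twelfth power kills the ramification coming from primes of additive reduction away from $p$, making $\theta$ unramified outside $p$. At each prime $\mathfrak{p}$ of $K$ above $p$, the semistability hypothesis forces $\lambda|_{I_\mathfrak{p}} \in \{1, \chi_p|_{I_\mathfrak{p}}\}$, so $\theta|_{I_\mathfrak{p}} = \chi_p^{12a_\mathfrak{p}}|_{I_\mathfrak{p}}$ for some $a_\mathfrak{p} \in \{0,1\}$. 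The case where $p$ ramifies in $K$ is part of the conclusion, so I may assume $p$ is unramified.

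Suppose first that $p$ splits, $p\mathcal{O}_K = \mathfrak{p}_1\mathfrak{p}_2$, and consider the ``type'' $(a_{\mathfrak{p}_1}, a_{\mathfrak{p}_2})$. If the type is asymmetric, say $(1,0)$, then $\theta$ has conductor exactly $\mathfrak{p}_1$ and corresponds, via class field theory, to a character of the ray class group $\mathrm{Cl}(\mathfrak{p}_1)$. From the exact sequence
\[
\mathcal{O}_K^\times \to (\mathcal{O}_K/\mathfrak{p}_1)^\times \to \mathrm{Cl}(\mathfrak{p}_1) \to \mathrm{Cl}(K) \to 1,
\]
together with the fact that the inertia action forces $\theta$ to restrict to $x \mapsto x^{12}$ on the image of $(\mathcal{O}_K/\mathfrak{p}_1)^\times$, well-definedness imposes $\epsilon^{12} \equiv 1 \pmod{\mathfrak{p}_1}$. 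Taking norms gives the third alternative $p \mid \mathrm{Norm}_{K/\Q}(\epsilon^{12}-1)$. If the type is symmetric, $(0,0)$ or $(1,1)$, then $\theta$ (respectively $\theta\chi_p^{-12}$) is unramified everywhere and hence factors through $\mathrm{Cl}(K)$; since $\mathrm{Cl}(K)$ has exponent $n \leq 3$, I deduce $\lambda^{36} \in \{1, \chi_p^{36}\}$. The inert case is handled analogously with a single prime $\mathfrak{p}$ above $p$, yielding the same constraint $\lambda^{36} \in \{1, \chi_p^{36}\}$.

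In both the symmetric-split and inert situations, the identity $\lambda^{36} \in \{1, \chi_p^{36}\}$ is a strong rigidity statement. I will run a Momose-style descent argument, comparing $\lambda$ with its Galois conjugate $\lambda^\sigma$ whose local types at primes above $p$ agree with those of $\lambda$ under the symmetry assumption, to show that the pair $(E, \ker\varphi)$ corresponds to a $\Q$-rational point on $X_0(p)$. Mazur's theorem then restricts $p$ to $\{2,3,5,7,11,13,17,19,37,43,67,163\}$. To rule out $p \in \{43,67,163\}$, note that for these primes the only non-cuspidal $\Q$-points of $X_0(p)$ correspond to CM elliptic curves with CM by $\Q(\sqrt{-p})$; such curves have integral $j$-invariant and potentially good reduction, so no twist (over $\Q$ or $K$) can be multiplicative at primes above $p$, and any such twist has additive reduction there, contradicting semistability. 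This leaves $p \in \{2,3,5,7,11,13,17,19,37\}$, as required.

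I expect the main obstacle to be the Momose-style descent in the symmetric and inert cases: translating the global identity $\lambda^{36} \in \{1, \chi_p^{36}\}$ into a genuine $\Q$-rational point on $X_0(p)$. The class-group hypothesis $n \leq 3$ is essential here—it keeps the exponent of $\lambda$ that lands in $\{1, \chi_p^{\bullet}\}$ bounded by $36$, which is small enough to perform the descent; weakening the hypothesis would force arbitrarily large exponents and preclude the approach.
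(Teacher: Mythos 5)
Your treatment of the non-constant (asymmetric) case is essentially the argument the paper uses: the ray-class-field computation forcing $\epsilon^{12}\equiv 1\pmod{\mathfrak{p}_1}$ and hence $p\mid\mathrm{Norm}_{K/\Q}(\epsilon^{12}-1)$ is exactly \citep[Corollary~3.2]{irred} (Corollary~\ref{eps} here). The problem is the constant-signature case, which is where all the difficulty of the theorem lives, and where your proposal has a genuine gap. The identity $\lambda^{12n}=1$ does \emph{not} translate into a $\Q$-rational point on $X_0(p)$, and there is no ``Momose-style descent'' that achieves this: the pair $(E,V_p)$ is a $K$-point of $X_0(p)$, and constancy of the signature only tells you that $w_p(x)\neq x^\tau$; it does not force $x=x^\tau$. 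Exceptional quadratic points (points with $w_p(x)\neq x^\tau$ that are not pullbacks of rational points) genuinely exist on several $X_0(p)$ --- indeed the paper has to invoke the classification of such points (Lemma~\ref{exceptional}) precisely to kill the one prime, $p=73$ with $n=3$, that survives its main sieve. Momose's own argument for Type~1 primes never descends to $\Q$; it bounds $p$ via congruences at auxiliary primes. So the step ``Mazur's theorem then restricts $p$'' is unavailable, and with it your elimination of all primes $\geq 23$ other than $37$.

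What the paper actually does in the constant case is: (i) use the formal-immersion results on $X_0(p)^{(2)}$ (Kamienny, Banwait--Derickx) to guarantee that for each auxiliary prime $q$ at least one prime $\mathfrak{q}\mid q$ has potentially good reduction (Proposition~\ref{red}); (ii) apply the congruence $\alpha^a(\alpha^\tau)^b\equiv\lambda^{12r}(\sigma_\mathfrak{q})\pmod{\mathfrak{p}_0}$ to the characteristic polynomial of Frobenius at $\mathfrak{q}$, yielding $p\mid R_q$ for explicitly computable resultant integers $R_q$ (Proposition~\ref{ResProp}, Corollary~\ref{Rqcorol}), ranging over all a priori possibilities for the splitting of $q$ and the order $r\mid n$ of $[\mathfrak{q}]$ in the class group; (iii) take the gcd over $3\leq q\leq 19$ and $q=2$, plus the finer criterion of Lemma~\ref{pcrit}; and (iv) dispose of the residual prime $73$ by the known classification of exceptional quadratic points on $X_0(73)$. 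The class-group hypothesis $n\leq 3$ enters not through a bounded exponent for a descent, but through the finitely many possibilities for $(n_\mathfrak{q},r)$ in the resultant computation. Your CM argument for excluding $43$, $67$, $163$ over $\Q$ is fine as far as it goes (it mirrors the proof of Corollary~\ref{ratcorol}), but it only becomes relevant after a reduction to $\Q$ that you have not supplied and that, as stated, cannot be supplied.
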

\endgroup

Although this theorem only considers the case $n \leq 3$, where $n$ is the exponent of the class group of $K$, we may obtain similar results for larger values of $n$. For example, in Section 5 we consider the case $n = 100$. We also note that the list of primes appearing in this theorem is the smallest possible: for each $p \in \{2,3,5,7,11,13,17,19,37 \}$ and $n \leq 3$ there exists a real quadratic field $K$ with class group exponent $n$ and an elliptic curve $E / K$ which has a $K$-rational $p$-isogeny and is semistable at all primes of $K$ above $p$ (see Remark \ref{conv_rem}).

If we work over a fixed quadratic field, which is not imaginary of class number $1$, then we can obtain more precise results. The following theorem considers certain `small' quadratic fields, both real and imaginary.

\begingroup
\renewcommand\thetheorem{2}
\begin{theorem}\label{Thm2} Let $K = \Q(\sqrt{d})$ with $d \in \{-5,2,3,5,6,7\}$. Let $p$ be a prime. There exists an elliptic curve $E / K$ which admits a $K$-rational $p$-isogeny and is semistable at all primes of $K$ above $p$ if and only if $p \in \{2,3,5,7,13,37\}$ or the pair $(d,p)$ appears in Table \ref{Tab2} below.
\begingroup 
\renewcommand*{\arraystretch}{1.5}
\begin{table}[ht!]
\begin{center}
\begin{tabular}{ |c|c|c|c|c|c|c|c|c } 
 \hline
 $d$ & $-5$ & $2$ & $3$ & $5$ & $6$ & $7$ \\
 \hline
 $p$ & $43$ & $11,19$  & $17,19$ & $17$ & $11,17$ & $11,17$ \\
 \hline
\end{tabular}
\caption{\label{Tab2} Remaining primes.}
\end{center}
\end{table} 
\endgroup
\end{theorem}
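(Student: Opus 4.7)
The strategy is to prove both directions of the biconditional. For the existence direction, first observe that for each $p \in \{2,3,5,7,13,37\}$ there is an elliptic curve $E/\Q$ with a rational $p$-isogeny that is semistable at $p$ (for instance from Cremona's tables); base-changing to $K$ produces a curve with the required properties over any $K$. For each pair $(d,p)$ in Table~\ref{Tab2}, I would exhibit an explicit elliptic curve $E/K$, arising from a $K$-rational degree-two point on the modular curve $X_0(p)$. Such quadratic points have been catalogued in the literature (by Bars, Bruin, Najman and others) and can be recovered by a direct search using a computer algebra system, after which semistability at primes above $p$ is a finite check.

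For the reverse direction I would first handle the five real quadratic fields $d \in \{2,3,5,6,7\}$, each of which has class number~$1$. Applying Theorem~\ref{Thm1} with $n = 1$ reduces the set of admissible primes to
\[
\{2,3,5,7,11,13,17,19,37\} \;\cup\; \{p \text{ ramified in } K\} \;\cup\; \{p \text{ split in }K : p \mid \mathrm{Norm}_{K/\Q}(\epsilon^{12}-1)\}.
\]
For each field I would compute $\epsilon^{12}-1$ from the fundamental unit, factor its norm, and intersect with the primes that split in $K$; this yields a short finite list of extra candidates. The ramified primes of these fields all lie in $\{2,3,5,7\}$ and are already realised. What remains is to decide, for each $p \in \{11,17,19\}$ and each surviving split prime, whether $(d,p)$ belongs in Table~\ref{Tab2}.

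To settle these remaining pairs I would work with $X_0(p)$ directly. For $p \in \{11,17,19\}$, the curve $X_0(p)$ is an elliptic curve of Mordell-Weil rank $0$ over $\Q$, so one can compute $X_0(p)(K)$ by performing a two-descent over $K$ and enumerate all non-cuspidal, non-CM $K$-rational points. For each such point one checks whether the associated $j$-invariant produces an elliptic curve semistable at the primes of $K$ above $p$; if so, $(d,p)$ enters the table, otherwise the pair is eliminated. Any surviving larger split primes, for which $X_0(p)$ has higher genus, would require a Mordell-Weil sieve or an appeal to the known classification of quadratic points on $X_0(p)$.

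The case $d = -5$ is the main obstacle, since $\Q(\sqrt{-5})$ is imaginary and Theorem~\ref{Thm1} does not apply. To bound the admissible primes I would invoke results on isogeny primes over imaginary quadratic fields (such as those of Banwait-Najman) together with explicit arguments on the image of the mod-$p$ Galois representation attached to $E$, exploiting the class number $2$ structure. This leaves only finitely many primes to analyse, each of which is treated by the same modular curve methods as above; in particular, the sporadic $\Q(\sqrt{-5})$-point on $X_0(43)$ accounts for the entry $(-5,43)$ in Table~\ref{Tab2}, while all other primes outside $\{2,3,5,7,13,37\}$ must be ruled out one by one.
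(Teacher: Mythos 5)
Your overall architecture matches the paper's: base change of rational curves for $p \in \{2,3,5,7,13,37\}$, Theorem~\ref{Thm1} with $n=1$ for the five real fields, the norm of $\epsilon^{12}-1$ to isolate the remaining split primes, and direct work on $X_0(p)$ for $p \in \{11,17,19\}$. However, two of your steps have genuine gaps. For $d=6$ the fundamental unit $\epsilon = 5+2\sqrt{6}$ satisfies $97 \mid \mathrm{Norm}_{K/\Q}(\epsilon^{12}-1)$ and $97$ splits in $\Q(\sqrt{6})$, so $p=97$ survives the Theorem~\ref{Thm1} filter. Your fallback for such primes is a Mordell--Weil sieve or the ``known classification of quadratic points on $X_0(p)$'', but $X_0(97)$ has genus $7$ and is neither hyperelliptic nor bielliptic, so its quadratic points are not classified in the literature you would cite, and a sieve on a genus-$7$ curve is not something you can simply assert. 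The paper instead eliminates $97$ purely locally: for a non-constant isogeny signature one has $p \mid R_q M_q$ for every auxiliary prime $q$ (Corollary~\ref{Rqcorol} and Proposition~\ref{MultProp}), and $q=5$ already does the job. The same gap recurs for $d=-5$: the quoted results of Banwait(--Derickx) are conditional on GRH precisely because of the signature $(6,6)$; that signature is excluded here by semistability, but you would still need to extract and justify the unconditional part, whereas the paper's self-contained route is again $p \mid \gcd(R_3M_3, R_7M_7)$, leaving only $\{17,43,71\}$, with $71$ discarded because it does not split.

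Two further points need attention. For the pairs $(d,p)$ with $p\in\{11,17,19\}$ that do appear in Table~\ref{Tab2}, $X_0(p)(K)$ has rank $1$, so ``enumerate all non-cuspidal $K$-rational points'' is impossible; this is harmless only because for those pairs you need existence, not elimination. For the excluded pairs (e.g.\ $(6,19)$), checking one curve with the given $j$-invariant is not enough: every quadratic twist represents the same point of $X_0(p)(K)$, and one must run the discriminant argument of Corollary~\ref{ratcorol} ($0 < v_{\mathfrak{p}}(\Delta) < 6$ forces $0 < v_{\mathfrak{p}}(\Delta_d) < 12$ for all twists, hence additive reduction) to rule out all representatives at once. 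Conversely, for $(-5,43)$ the first curve one writes down from the quadratic point on $X_0(43)$ is \emph{not} semistable above $43$; only a suitable quadratic twist is, so your ``finite semistability check'' in the existence direction must explicitly include a search over twists rather than a single reduction-type computation.
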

\endgroup
We highlight the fact that this is an `if and only if' statement. It is also possible to produce similar results for quadratic fields with large class group exponent. As an example, in Section 5.3 we consider a quadratic field with class group $\Z / 122 \Z$.

We now outline the rest of the paper. In Section 2, we analyse the situation over the rationals, and prove a result analogous to Theorem \ref{Thm2}. This result is a corollary to  Mazur's theorem stated above. In Section 3, we study the mod $p$ Galois representation of an elliptic curve with a $p$-isogeny, and we introduce the notions of isogeny characters and isogeny signatures. Next, in Section 4, by studying the ramification of these isogeny characters and by investigating certain properties of the modular curve $X_0(p)$, we see how the existence of an elliptic curve with a $p$-isogeny places stringent conditions on the prime $p$. This provides us with a method for ruling out the existence of such primes. In Section 5, we apply this method, combined with a study of quadratic points on modular curves, to prove Theorems \ref{Thm1} and \ref{Thm2}. We also consider further examples.

The \texttt{Magma} \citep{magma} code used to support the computations in this paper can be found at: 

\vspace{3pt}

 \url{https://warwick.ac.uk/fac/sci/maths/people/staff/michaud/c/}

\bigskip

I am grateful to my supervisors, Samir Siksek and Damiano Testa, for all their help and support. I would also like to thank Barinder Banwait for very helpful correspondence. Finally, I am grateful to the anonymous referee for many helpful suggestions that have improved the exposition of the paper.


\section{Elliptic curves with rational \texorpdfstring{$p$}.-isogenies}


We start with a short analysis of the situation over the rationals. Let $E / \Q$ be an elliptic curve and let $p$ be prime for which $E$ admits a rational $p$-isogeny. We will denote the kernel of this isogeny by $V_p$, which is a rational cyclic subgroup of order $p$. The pair $(E,V_p)$ then gives rise to a non-cuspidal point $x \in X_0(p)(\Q)$. The study of the modular curve $X_0(p)$, and in particular the Eisenstein quotient of its Jacobian, allowed Mazur to prove his celebrated result \citep[Theorem~1]{ratisog} (stated in the introduction), which classifies the primes $p$ for which $X_0(p)$ has non-cuspidal rational points. This result allows us to obtain an analogue of Theorem \ref{Thm2} quite easily. 

\begin{corollary}[Corollary to Mazur's theorem on isogenies]\label{ratcorol} There exists an elliptic curve $E / \Q$ which admits a rational $p$-isogeny and is semistable at $p$ if and only if \[ p \in \{2,3,5,7,13,37 \}. \]
\end{corollary}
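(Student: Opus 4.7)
My plan is to apply Mazur's theorem, which reduces the question to the twelve candidate primes $p \in \{2,3,5,7,11,13,17,19,37,43,67,163\}$, and then decide the semistability condition for each prime in turn.

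\textbf{Existence direction.} For $p \in \{2,3,5,7,13\}$, the modular curve $X_0(p)$ has genus $0$ and infinitely many non-cuspidal rational points, so a suitable $E/\Q$ with good or multiplicative reduction at $p$ is immediate to exhibit from, say, Cremona's tables. For $p = 37$, one of the two non-cuspidal rational $j$-invariants on $X_0(37)$, namely $j = -7 \cdot 11^3$, is the $j$-invariant of (for instance) the elliptic curve \texttt{1225.b1}, which has conductor $5^2 \cdot 7^2$ and therefore good reduction at $37$ while carrying the required rational $37$-isogeny.

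\textbf{Non-existence direction.} For each remaining prime $p \in \{11, 17, 19, 43, 67, 163\}$, the non-cuspidal rational points of $X_0(p)$ form an explicit finite list, and in every case the corresponding $j$-invariants satisfy $v_p(j) \geq 0$. Any elliptic curve $E/\Q$ with such a $j$-invariant therefore has potentially good reduction at $p$, so semistability at $p$ is equivalent to some quadratic twist having good reduction there. The plan is to rule out this possibility $j$-invariant by $j$-invariant. For $p \in \{19, 43, 67, 163\}$ the single relevant $j$-invariant is a class-number-one CM $j$-invariant attached to $\Q(\sqrt{-p})$; since $p$ ramifies in $\Q(\sqrt{-p})$ and the CM property is preserved under twisting, no twist can have good reduction at $p$. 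For $p = 11$ the three non-cuspidal $j$-invariants are handled analogously using the CM structure associated with $\Q(\sqrt{-11})$ (or, equivalently, by direct inspection of the isogeny class of the CM curve $j = -2^{15}$). For $p = 17$, where neither of the two non-cuspidal $j$-invariants is CM, one computes the minimal Weierstrass model of a representative curve and observes that its Kodaira type at $17$ differs from $I_0^{\ast}$, equivalently that the image of inertia on the $\ell$-adic Tate module has order $\neq 1, 2$, so no quadratic twist gives good reduction at $17$.

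The main obstacle is the $p = 17$ case, which does not reduce to the clean CM argument and requires a hands-on computation of the reduction type of two specific elliptic curves. All other cases follow routinely from the explicit classification of rational $p$-isogenies provided by Mazur together with the basic observation that the reduction behaviour of a CM elliptic curve at a rational prime is governed by the splitting of that prime in the CM field, a quantity which is invariant under quadratic twisting.
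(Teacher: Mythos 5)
Your overall structure (Mazur's list, then a $j$-invariant-by-$j$-invariant analysis of quadratic twists) is sound, and for $p \in \{19,43,67,163\}$ your CM argument is a genuinely different, more conceptual route than the paper's: the paper instead verifies computationally that a representative curve $F$ at each non-cuspidal point satisfies $0 < v_p(\Delta(F)) < 6$ with $j(F) \notin \{0,1728\}$, so that every quadratic twist $F_d$ has $0 < v_p(\Delta(F_d)) = v_p(\Delta(F)) + 6v_p(d) < 12$ and hence additive reduction. Do note that your CM step really rests on the fact that the conductor of a CM elliptic curve over $\Q$ is divisible by the discriminant of the CM field; the slogan ``reduction behaviour is governed by the splitting of the prime in the CM field'' is usually a statement about ordinary versus supersingular reduction at primes of \emph{good} reduction, which is not the fact you need.

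The genuine gap is at $p = 11$. Of the three non-cuspidal rational points of $X_0(11)$, only one is CM: the point with $j = -2^{15}$, with CM by $\Q(\sqrt{-11})$, fixed by $w_{11}$. The other two points are swapped by $w_{11}$ and have $j$-invariants $-11^2$ and $-11\cdot 131^3$; these are not CM $j$-invariants, and the corresponding curves do not lie in the isogeny class of the CM curve (isogeny classes are closed under all isogenies, and these two curves are $11$-isogenous only to each other). So neither the CM argument nor ``inspection of the isogeny class of $j=-2^{15}$'' reaches them. Nor does $v_{11}(j) > 0$ help on its own: a curve with good reduction at $11$ can perfectly well reduce to a curve with $j \equiv 0 \pmod{11}$. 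These two points therefore require exactly the hands-on computation you reserve for $p=17$: check that the Kodaira type at $11$ of a representative curve is additive but not $I_0^{\ast}$ (equivalently, in the paper's formulation, that $0 < v_{11}(\Delta_{\min}) < 6$, so no twist can reach a discriminant valuation divisible by $12$). Once that computation is added the non-existence direction is complete; your existence direction agrees with the paper's Table \ref{TabRat}.
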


\begin{proof} Suppose first that $E/ \Q$ is an elliptic curve which admits a rational $p$-isogeny and is semistable at $p$. By Theorem \ref{maz}, it will suffice to rule out the primes \[ p \in \{11,17,19,43,67,163 \}. \] For each of these values of $p$, the modular curve $X_0(p)$ has only finitely many non-cuspidal rational points and we let $x \in X_0(p)(\Q)$ denote one of these. Using \texttt{Magma}'s small modular curve package, we can write down an elliptic curve $F / \Q$ with a rational subgroup $W_p$ of order $p$ such that the pair $(F,W_p)$ gives rise to the point $x$. In each case, the curve $F$ (we have chosen) has additive potentially good reduction at $p$ (so $F$ is not semistable at $p$) and its $j$-invariant is not equal to $0$ or $1728$. We compute that $0 < v_p(\Delta(F))  < 6$ in each case. In particular, $F$ is minimal at $p$.

However, this alone is not enough to rule out the prime $p$. The pair $(F,W_p)$ is one representative for the point $x \in X_0(p)(\Q)$, and it is possible that a different representative is semistable at $p$. Suppose $(\hat{F},\hat{W}_p)$ also represents the point $x \in X_0(p)(\Q)$ for an elliptic curve $\hat{F} / \Q$ with a rational subgroup of order $p$. We note that $j(F) = j(\hat{F})$, so $\hat{F}$ also has potentially good reduction at $p$. The curves $F$ and $\hat{F}$ are isomorphic (over $\overline{\Q}$), and since $j(F) = j(\hat{F}) \notin \{0,1728\}$, the curves are quadratic twists of each other (up to isomorphism over $\Q$) by some squarefree $d \in \Z$, and so we may replace $\hat{F}$ by $F_d$, where $F_d$ denotes the quadratic twist of $F$ by $d$. Since $\Delta(F_d) = d^6 \cdot \Delta(F)$, we see that \[ v_p(\Delta(F_d)) = v_p(\Delta(F)) + 6v_p(d). \] It follows that $0 < v_p(\Delta(F_d)) < 12$, so $F_d$ is minimal at $p$ and $F_d$ does not have good reduction at $p$. So $F_d$ must have additive reduction at $p$.

For the converse, it suffices to find elliptic curves which have a rational $p$-isogeny and are semistable at $p$ for $p \in \{ 2,3,5,7,13,37\}$. Table \ref{TabRat} gives an example of such an elliptic curve in each case. We have chosen an elliptic curve of minimal conductor in each case.
\end{proof}

\begingroup 
\renewcommand*{\arraystretch}{1.5}
\begin{table}[ht!]
\begin{center}
\begin{tabular}{ |c|c|c|c|c|c|c|c| } 
 \hline
 $p$ & $2$ & $3$ & $5$ & $7$ & $13$ & $37$ \\
 \hline
 $E$ & 14a1 & 14a1 & 11a1 & 26b1 & 147b1 & 1225e1 \\
 \hline
 $N(E)$ & $2 \cdot 7$ & $2 \cdot 7$ & $11$ & $2 \cdot 13$ & $3 \cdot 7^2$ & $5^2 \cdot 7^2$ \\
 \hline
\end{tabular}
\caption{\label{TabRat} Elliptic curves for the proof of Corollary \ref{ratcorol}. We have used Cremona's labelling and $N(E)$ denotes the conductor of $E$.}
\end{center}
\end{table} 
\endgroup


\section{Isogeny characters and isogeny signatures}


We will now shift our attention to quadratic fields. In this section we will introduce two key concepts: \emph{isogeny characters} and \emph{isogeny signatures}. We will define these concepts in relation to our set-up, but they can be defined more generally for elliptic curves with $p$-isogenies over arbitrary number fields (see \citep{B-D}).

For the remainder of the paper, we will let $p$ be a prime and let $E / K$ be an elliptic curve over a quadratic field $K$ such that:
\begin{itemize} 
\item $E$ admits a $K$-rational $p$-isogeny; and
\item $E$ is semistable at all primes $\mathfrak{p} \mid p.$
\end{itemize}
In this section we will assume that:
\begin{itemize}
\item $p \geq 17$; and 
\item $p$ is unramified in $K$.
\end{itemize}
We denote by $\varphi$ this $K$-rational $p$-isogeny, and we write $V_p$ for its kernel throughout. The group $V_p$ is a $K$-rational cyclic subgroup of $E[p]$ of order $p$. Write $G_K = \mathrm{Gal}(\overline{K} / K)$. The mod $p$ Galois representation of $E$, $\overline{\rho}_{E,p} : G_K \rightarrow \mathrm{GL}_2(\mathbb{F}_p)$, is reducible, and we have that
\[\overline{\rho}_{E,p} \sim 
 \mbox{\Large%
    $ \begin{psmallmatrix} \lambda \; \; & *  \\[4pt] 0 \; \; & \chi_p \lambda^{-1} \end{psmallmatrix}$},
\]
where $\chi_p$ denotes the mod $p$ cyclotomic character. We call $\lambda : G_K \rightarrow \mathbb{F}_p^\times$ the \textbf{isogeny character} of $(E,V_p)$. This character gives the action of $G_K$ on $V_p$, and we can choose $R \in V_p$ such that for all $\sigma \in G_K$, \[ R^\sigma = \lambda(\sigma)R.\] 

Throughout, we will let $\tau$ denote the generator of $\Gal(K / \Q)$. By choosing an automorphism of $\overline{K}$ extending $\tau$, we may also view $\tau$ as an element of $\Gal(\overline{K} / \Q)$. The following lemma describes the isogeny characters of $(E^\tau, V_p^\tau)$ and $(E/V_p, E[p] /V_p)$. 

\begin{lemma}\label{sig1} Let $\lambda$ be the isogeny character of $(E,V_p)$. \begin{enumerate}[(i)]
\item The isogeny character of $(E^\tau,V_p^\tau)$ is $\lambda^{\tau}$, defined by $\lambda^{\tau}(\sigma) = \lambda(\tau \sigma \tau^{-1})$ for $\sigma \in G_K$.
\item The isogeny character of $(E/V_p, E[p]/V_p)$ is $\chi_p \lambda^{-1}$.
\end{enumerate}
\end{lemma}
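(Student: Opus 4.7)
The plan for both parts is to fix a generator $R \in V_p$ with $R^\sigma = \lambda(\sigma) R$ for all $\sigma \in G_K$, and then compute the Galois action on an explicit generator of the new order-$p$ subgroup in each case. Both arguments are short, but part (i) requires some care with conventions.

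For part (i), I would first note that $E^\tau$ is defined over $K$ (since $\tau$ preserves $K$, as $K/\Q$ is Galois) and that $V_p^\tau$ is $G_K$-stable. The key structural input is that $G_K$ is normal in $\mathrm{Gal}(\overline{K}/\Q)$, so $\tau \sigma \tau^{-1} \in G_K$ for every $\sigma \in G_K$. This normality simultaneously ensures that $V_p^\tau$ is preserved by $G_K$ and that $\lambda^\tau$ is a well-defined character on $G_K$. Since $R^\tau$ generates $V_p^\tau$, it then suffices to compute, for $\sigma \in G_K$,
\[ (R^\tau)^\sigma = R^{\tau\sigma} = R^{(\tau\sigma\tau^{-1})\tau} = \bigl(R^{\tau\sigma\tau^{-1}}\bigr)^\tau = \bigl(\lambda(\tau\sigma\tau^{-1})\, R\bigr)^\tau = \lambda^\tau(\sigma)\, R^\tau, \]
where the final step uses that multiplication by an integer representative of $\lambda(\tau\sigma\tau^{-1}) \in \mathbb{F}_p$ is a morphism defined over $\Q$, hence commutes with the action of $\tau$.

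For part (ii), I would complete $R$ to an $\mathbb{F}_p$-basis $(R, S)$ of $E[p]$. In this basis, $\overline{\rho}_{E,p}(\sigma)$ is upper triangular with diagonal entries $\lambda(\sigma)$ and $b(\sigma)$ for some character $b : G_K \to \mathbb{F}_p^\times$. Taking determinants, the Weil pairing identifies $\det \overline{\rho}_{E,p}$ with the mod $p$ cyclotomic character $\chi_p$, so $\lambda(\sigma)b(\sigma) = \chi_p(\sigma)$ and hence $b = \chi_p \lambda^{-1}$. The image $\overline{S}$ of $S$ in $E[p]/V_p$ generates this cyclic group of order $p$, and by the description of the matrix its Galois action is precisely given by $b$. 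This identifies the isogeny character of $(E/V_p, E[p]/V_p)$ as $\chi_p \lambda^{-1}$.

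I do not anticipate any serious obstacle. The only subtle point is in part (i): one must be careful to check that the rewriting $R^{\tau\sigma} = \bigl(R^{\tau\sigma\tau^{-1}}\bigr)^\tau$ is legitimate, which relies on the right-action convention for Galois (implicit in the notation $R^\sigma = \lambda(\sigma)R$) together with the normality of $G_K$ in $\mathrm{Gal}(\overline{K}/\Q)$.
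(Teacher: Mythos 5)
Your proposal is correct and follows essentially the same route as the paper: part (i) is the identical computation $(R^\tau)^\sigma = \bigl(R^{\tau\sigma\tau^{-1}}\bigr)^\tau = \lambda(\tau\sigma\tau^{-1})R^\tau$, and part (ii) reads off the action on a generator of the quotient from the lower-right entry of the upper-triangular matrix (the paper phrases this via the image $\varphi(R_2)$ under the $K$-rational isogeny rather than the abstract quotient, which is only a cosmetic difference). Your extra remarks --- normality of $G_K$ in $\Gal(\overline{K}/\Q)$ and the Weil-pairing identification $\det\overline{\rho}_{E,p} = \chi_p$ --- are correct justifications of facts the paper takes as given from its setup.
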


\begin{proof} These statements are well-known. We provide some short proofs here as we were unable to find any in the literature. For (i), let $R$ be a generator of $V_p$ satisfying $R^\sigma = \lambda(\sigma)R$ for all $\sigma \in G_K$. The point $R^\tau$ generates $V_p^\tau$. Let $\sigma \in G_K$. Then $\tau \sigma \tau^{-1} \in G_K$ and \[ R^{\tau \sigma \tau^{-1}} = \lambda(\tau \sigma \tau^{-1}) R.\] Applying $\tau$, we see that \[(R^\tau)^\sigma = \lambda(\tau \sigma \tau^{-1}) R^\tau. \] So the isogeny character of $(E^\tau,V_p^\tau)$ maps $\sigma$ to $\lambda(\tau \sigma \tau^{-1})$ as required.

For (ii), let $\varphi$ be the $K$-rational $p$-isogeny with kernel $V_p$. This means that $(E/V_p, E[p]/V_p) = (\varphi(E), \varphi(E[p])$. We fix a basis $(R_1,R_2)$ of $E[p]$ so that $R_1^\sigma = \lambda(\sigma)(R_1)$ for any $\sigma \in G_K$. Then $\varphi(R_2)$ generates $\varphi(E[p])$, and for any $\sigma \in G_K$ we have \[\varphi(R_2)^\sigma = \varphi^\sigma(R_2^\sigma) = \varphi\left(b_\sigma R_1 + (\chi_p \lambda^{-1})(\sigma)R_2 \right) = (\chi_p \lambda^{-1})(\sigma) \varphi(R_2),\] where $b_\sigma$ is the upper-right entry of the matrix $\overline{\rho}_{E,p}(\sigma)$ (with respect to the basis $(R_1,R_2)$).
\end{proof}

We will be particularly interested in studying the ramification of the character $\lambda^{12}$. For a prime $\mathfrak{p}$ of $K$ above $p$, we will denote by $I_\mathfrak{p}$ the inertia subgroup of $G_K$ corresponding to $\mathfrak{p}$.

\begin{proposition}[{\citep[Proposition~2.1]{irred}}]\label{sigdef}   Let $\lambda$ be the isogeny character of $(E,V_p)$.  Then $\lambda^{12}$ is unramified at the infinite primes of $K$ and at all finite primes of $K$ coprime to $p$. 
If $\mathfrak{p} \mid p$ is a prime of $K$, then there exists a unique integer $s_\mathfrak{p} \in \{0,12\}$ such that \[ \lambda^{12} |_{I_\mathfrak{p}} = (\chi_p |_{I_\mathfrak{p}})^{s_\mathfrak{p}}.\]
\end{proposition}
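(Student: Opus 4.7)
The plan is to analyse $\lambda$ place by place on $K$. The two claims are about unramifiedness of $\lambda^{12}$ (at infinite places and at finite places coprime to $p$) and about the precise form of $\lambda^{12}|_{I_\mathfrak{p}}$ for $\mathfrak{p} \mid p$, and both reduce to understanding the local Galois action on $E[p]$, which in turn is governed by the reduction type of $E$.

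At an archimedean place of $K$, the local Galois group has order at most $2$, so any character raised to the $12$th power is automatically trivial there. For a finite prime $\mathfrak{q} \nmid p$ with residue characteristic $\ell$, wild inertia (a pro-$\ell$ group) acts trivially on the $\mathbb{F}_p$-vector space $E[p]$, so only tame inertia contributes, and its image in $\mathrm{GL}_2(\mathbb{F}_p)$ lies in $\mathrm{SL}_2(\mathbb{F}_p)$ since $\chi_p|_{I_\mathfrak{q}}$ is trivial. If $E$ has good reduction at $\mathfrak{q}$, N\'eron--Ogg--Shafarevich forces $\lambda|_{I_\mathfrak{q}} = 1$; if multiplicative, the Tate parametrisation presents $E[p]$ as an extension $0 \to \mu_p \to E[p] \to \mathbb{Z}/p \to 0$ on which inertia acts by $\chi_p = 1$ and trivially, and a short check of the two possible stable lines yields $\lambda|_{I_\mathfrak{q}} = 1$; if additive, the Kodaira--N\'eron classification shows $E$ attains semistable reduction over a tame extension whose geometric degree lies in $\{2, 3, 4, 6\}$, so the image of $I_\mathfrak{q}$ in $\mathrm{Aut}(E[p])$ has order dividing $12$, forcing $\lambda^{12}|_{I_\mathfrak{q}} = 1$.

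For a prime $\mathfrak{p} \mid p$, semistability at $\mathfrak{p}$ restricts the reduction type to good or multiplicative. In the good reduction case, the first step is to rule out the supersingular possibility: there, the image of $I_\mathfrak{p}$ in $\mathrm{GL}_2(\mathbb{F}_p)$ is a non-split Cartan subgroup (generated by the fundamental characters of level $2$), which has no $\mathbb{F}_p$-rational eigenvector, contradicting the existence of the Galois-stable line $V_p$. So $E$ is ordinary at $\mathfrak{p}$, and the connected-\'etale sequence $0 \to E[p]^0 \to E[p] \to E[p]^{\mathrm{et}} \to 0$ has $I_\mathfrak{p}$ acting by $\chi_p$ on $E[p]^0$ and trivially on $E[p]^{\mathrm{et}}$. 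The line $V_p$ is either equal to $E[p]^0$ (giving $\lambda|_{I_\mathfrak{p}} = \chi_p|_{I_\mathfrak{p}}$ and $s_\mathfrak{p} = 12$) or maps isomorphically as a Galois module to $E[p]^{\mathrm{et}}$ (giving $\lambda|_{I_\mathfrak{p}} = 1$ and $s_\mathfrak{p} = 0$). The multiplicative case runs identically, with the Tate parametrisation playing the role of the connected-\'etale sequence.

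Finally, uniqueness of $s_\mathfrak{p}$ follows from verifying that $\chi_p^{12}|_{I_\mathfrak{p}}$ is non-trivial: since $p$ is unramified in $K$, $\chi_p|_{I_\mathfrak{p}}$ surjects onto $\mathbb{F}_p^\times$, so its image has order $p - 1$, and $(p - 1)/\gcd(p-1, 12) > 1$ for $p \geq 17$. The most delicate step, I expect, is the additive reduction case at $\mathfrak{q} \nmid p$: controlling the image of inertia there requires the Kodaira--N\'eron classification and some care to ensure that wild ramification at small residue characteristics does not contribute beyond the tame bound; the rest of the argument is a clean application of standard local results on N\'eron models, the connected-\'etale sequence, and Tate uniformisation.
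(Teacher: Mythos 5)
The paper does not actually prove this proposition; it is quoted verbatim from Freitas--Siksek \citep[Proposition~2.1]{irred}, so the only comparison available is with the standard argument in that reference (and in \citep{Criteres}), which is essentially the local case analysis you give: trivial at archimedean places since $12$ is even; good/ordinary versus multiplicative splitting via the connected--\'etale sequence and Tate uniformisation at $\mathfrak{p} \mid p$, with supersingular reduction excluded because the level-$2$ fundamental character acts irreducibly (this is where $p$ unramified in $K$ is used); and uniqueness of $s_\mathfrak{p}$ from $(\chi_p|_{I_\mathfrak{p}})^{12} \neq 1$ for $p \geq 17$. All of that is correct as you have written it.

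The one genuine gap is exactly the step you flag: additive potentially good reduction at $\mathfrak{q} \nmid p$ with residue characteristic $\ell \in \{2,3\}$. Two of your assertions fail there. First, wild inertia need \emph{not} act trivially on $E[p]$: its image in $\mathrm{GL}_2(\mathbb{F}_p)$ is an $\ell$-group, and since $\ell \mid \#\mathrm{GL}_2(\mathbb{F}_p)$ this can be nontrivial (for $\ell = 2$ the image of $I_\mathfrak{q}$ can be the quaternion group $Q_8$ or $\mathrm{SL}_2(\mathbb{F}_3)$, neither of which is a quotient of tame inertia). Second, the minimal extension over which $E$ acquires good reduction need not be tame and its degree need not lie in $\{2,3,4,6\}$: for $\ell = 3$ degree $12$ occurs, and for $\ell = 2$ degrees $8$ and $24$ occur. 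The repair does not go through killing wild inertia at all; it goes through the fact that $\lambda$ is a \emph{character}, so $\lambda|_{I_\mathfrak{q}}$ factors through the abelianization of the image $\Phi_\mathfrak{q}$ of $I_\mathfrak{q}$ in $\mathrm{Aut}(E[p])$. By the Serre--Tate/Kraus classification, $\Phi_\mathfrak{q}$ is cyclic of order in $\{1,2,3,4,6,12\}$, or isomorphic to $Q_8$, or to $\mathrm{SL}_2(\mathbb{F}_3)$; the respective abelianizations have exponent dividing $12$, $2$, and $3$, so in every case $\lambda^{12}|_{I_\mathfrak{q}} = 1$. (Your potentially multiplicative case is unaffected: there the relevant quadratic twist gives $\lambda^2|_{I_\mathfrak{q}} = 1$ regardless of wildness.) With that substitution the argument is complete and agrees with the proof in the cited source.
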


If $s_\mathfrak{p} = 0$ then we see that $\lambda^{12}$ is unramified at $\mathfrak{p}$. We now fix, once and for all, a prime $\mathfrak{p}_0 \mid p$ of $K$.  We define the \textbf{isogeny signature of $(E,V_p)$} to be $(s_{\mathfrak{p}_0}, s_{\tau(\mathfrak{\mathfrak{p}_0)}})$. We will also refer to this as the isogeny signature of the associated character $\lambda$. This isogeny signature is one of \[ (0,0),\, (12,12), \, (12,0), \,\, \text{or } (0,12). \] We will refer to $(0,0)$ and $(12,12)$ as \emph{constant} isogeny signatures, and we will refer to $(12,0)$ and $(0,12)$ as \emph{non-constant} isogeny signatures. We note that if the prime $p$ is inert in $K$, then the isogeny signature of $(E,V_p)$ is constant, since $\tau(\mathfrak{p}_0) = \mathfrak{p}_0$. Primes $p$ for which the isogeny signature of $(E,V_p)$ is constant are referred to as \emph{Type 1 primes} in \citep{Momose} and \citep{Banwait}. If the isogeny signature of $\lambda$ is $(0,0)$, then $\lambda^{12}$ is everywhere unramified.

\begin{remark} Our assumption that $E$ is semistable at the primes of $K$ above $p$ forces the integer $s_\mathfrak{p}$ appearing in Proposition \ref{sigdef} to be $0$ or $12$. Without assuming this semistability condition, $s_\mathfrak{p}$ can also take the values $4$, $6$, and $8$ (see \citep[pp.~7--9]{Criteres}). This gives rise to many more possible isogeny signatures. In particular, one of the isogeny signatures which must be considered is $(6,6)$. This is the isogeny signature which is the most difficult to rule out and it is the reason that the generalised Riemann hypothesis is assumed in \citep{Banwait} and \citep{B-D}. In the case that $K = \Q$, the case analogous to isogeny signature $(6,6)$ is considered by Mazur in \citep[pp.~154--155]{ratisog}, and is ruled out using some relatively elementary algebraic number theory to conclude that the class number of $\Q(\sqrt{-p})$ must be $1$.
\end{remark}

\begin{lemma}\label{sig2} Suppose the isogeny signature of $(E,V_p)$ is $(a,b)$. \begin{enumerate}[(i)]
\item The isogeny signature of $(E^\tau,V_p^\tau)$ is $(b,a)$.
\item The isogeny signature of $(E /V_p,E[p] / V_p)$ is $(12-a,12-b)$.
\item Let $\hat{E} / K$ be an elliptic curve with a $K$-rational subgroup $\hat{V}_p$ of order $p$. Suppose $\psi : E \rightarrow \hat{E}$ is an isomorphism (over $\overline{K}$) satisfying $\psi(V_p) = \hat{V}_p$. Then $(\hat{E}, \hat{V}_p)$ has isogeny signature $(a,b)$.
\end{enumerate}

\begin{proof} Parts (i) and (ii) follow from Lemma \ref{sig1} and the definition of isogeny signature. For (iii), the curve $\hat{E}$ will be a twist of the curve $E$ by a character, $\theta$, whose order divides $2$, $4$, or $6$. In particular, the order of $\theta$ divides $12$ and it follows that the twelfth powers of the isogeny characters of $(E,V_p)$ and $(\hat{E},\hat{V}_p)$ agree, and so the isogeny signatures must also agree. We refer to \citep[pp.~6--9]{twists} for more on how $\overline{\rho}_{E,p}$ is affected by twisting.  We note that (iii) is stated in \citep[p.~330]{Momose}.
\end{proof}
\end{lemma}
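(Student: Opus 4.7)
The plan is to deduce all three parts from Lemma \ref{sig1} and the definition of isogeny signature, together with simple facts about how inertia groups and the cyclotomic character behave under conjugation and twisting.

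For part (i), by Lemma \ref{sig1}(i) the isogeny character of $(E^\tau,V_p^\tau)$ is $\lambda^\tau$, where $\lambda^\tau(\sigma)=\lambda(\tau\sigma\tau^{-1})$. I would use the fact that conjugation by (a lift of) $\tau$ sends inertia to inertia: $\tau I_{\mathfrak{p}_0}\tau^{-1}=I_{\tau(\mathfrak{p}_0)}$, and dually $\tau I_{\tau(\mathfrak{p}_0)}\tau^{-1}=I_{\mathfrak{p}_0}$. Combined with the fact that the cyclotomic character is $\Gal(\overline{K}/\Q)$-stable (equivalently, invariant under conjugation, since $\chi_p$ extends to an abelian-valued character of $G_\Q$), we get for $\sigma\in I_{\mathfrak{p}_0}$
\[
(\lambda^\tau)^{12}(\sigma)=\lambda^{12}(\tau\sigma\tau^{-1})=\chi_p(\tau\sigma\tau^{-1})^{s_{\tau(\mathfrak{p}_0)}}=\chi_p(\sigma)^{s_{\tau(\mathfrak{p}_0)}},
\]
so $s_{\mathfrak{p}_0}(\lambda^\tau)=b$, and symmetrically $s_{\tau(\mathfrak{p}_0)}(\lambda^\tau)=a$.

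For part (ii), by Lemma \ref{sig1}(ii) the character of the dual pair is $\chi_p\lambda^{-1}$. Restricting to $I_\mathfrak{p}$ for $\mathfrak{p}\mid p$ and using Proposition \ref{sigdef}, I get
\[
(\chi_p\lambda^{-1})^{12}\bigm|_{I_\mathfrak{p}}=\chi_p^{12}\bigm|_{I_\mathfrak{p}}\cdot\chi_p^{-s_\mathfrak{p}}\bigm|_{I_\mathfrak{p}}=\chi_p^{\,12-s_\mathfrak{p}}\bigm|_{I_\mathfrak{p}},
\]
which by uniqueness in Proposition \ref{sigdef} shows the signature is $(12-a,12-b)$.

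For part (iii), the hypothesis is that $\psi:E\to\hat{E}$ is an $\overline{K}$-isomorphism respecting the cyclic subgroups. Hence $\hat{E}$ is a twist of $E$ by some character $\theta:G_K\to\overline{K}^\times$, and the twist $\hat{E}$ being defined over $K$ forces $\theta$ to have order dividing $2$, $4$, or $6$ (these being the possible orders of $\mathrm{Aut}(E_{\overline{K}})$); in any case $\theta^{12}=1$. Twisting multiplies $\overline{\rho}_{E,p}$ by $\theta$, so the isogeny character of $(\hat{E},\hat{V}_p)$ is $\theta\lambda$, and $(\theta\lambda)^{12}=\lambda^{12}$. Therefore the signatures of $(E,V_p)$ and $(\hat{E},\hat{V}_p)$ agree. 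I would cite \citep[pp.~6--9]{twists} for the effect of twisting on $\overline{\rho}_{E,p}$.

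The only subtle point, and the step I would be most careful about, is the conjugation-invariance of $\chi_p$ used in (i): one should fix a lift of $\tau$ to $\Gal(\overline{K}/\Q)$, note that $\chi_p$ is the restriction to $G_K$ of the cyclotomic character on $G_\Q$ (which is abelian-valued), and conclude $\chi_p(\tau\sigma\tau^{-1})=\chi_p(\sigma)$. Apart from this bookkeeping, the argument is a direct unwinding of definitions.
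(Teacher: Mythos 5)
Your proof is correct and takes essentially the same route as the paper: parts (i) and (ii) are exactly the unwinding of Lemma \ref{sig1} and Proposition \ref{sigdef} that the paper leaves implicit (your care with $\tau I_{\mathfrak{p}_0}\tau^{-1}=I_{\tau(\mathfrak{p}_0)}$ and the conjugation-invariance of $\chi_p$ is precisely the content of ``follows from the definition of isogeny signature''), and part (iii) is the identical twisting argument with $\theta^{12}=1$. There are no gaps.
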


A pair $(E,V_p)$ gives rise to a $K$-rational point on $X_0(p)$. Part (iii) of Lemma \ref{sig2} allows us to extend the definition of isogeny signature to non-cuspidal points $y \in X_0(p)(K)$. We define the \textbf{isogeny signature of a non-cuspidal point $y \in X_0(p)(K)$} to be the isogeny signature of any pair $(\hat{E},\hat{V}_p)$ representing $y$, for $\hat{E}$ an elliptic curve defined over $K$ and $\hat{V}_p$ a $K$-rational subgroup of order $p$. 
If the isogeny signature of $y$ is $(a,b)$, then Parts (i) and (ii) of Lemma \ref{sig2} show that:
\begin{enumerate}[(i)]
\item the isogeny signature of $y^\tau$ is $(b,a)$; and
\item the isogeny signature of $w_p(y)$ is $(12-a,12-b)$.
\end{enumerate}
Here, $w_p$ denotes the Atkin--Lehner involution on $X_0(p)$. 


\section{Eliminating primes}


Throughout this section we will again assume that $p \geq 17$ and that $p$ is unramified in $K$. We continue to denote by $\mathfrak{p}_0$ a fixed prime of $K$ above $p$. We write $\mathcal{O}_K$ for the ring of integers of $K$. We write $\lambda$ for the isogeny character of $(E,V_p)$, and $(a,b)$ for the isogeny signature of $(E,V_p)$. 

For the remainder of the paper, we will write $\mathfrak{q}$ for a prime of $K$ above a rational prime $q$. We write $n_\mathfrak{q}$ for the norm of the ideal $\mathfrak{q}$, and we will denote by $\sigma_\mathfrak{q} \in G_K$ a Frobenius element at $\mathfrak{q}$.

Our aim in this section is to see how to reduce the number of possibilities for $p$ to a (small) finite set. Our strategy for bounding, and subsequently eliminating, $p$ is based on the following key result.
\begin{proposition}\label{Keyprop} Let $\lambda$ be the isogeny character of $(E,V_p)$ with isogeny signature $(a,b)$. Let $\mathfrak{q} \nmid p$ be a prime of $K$, let $r$ be the order of the class of $\mathfrak{q}$ in the class group of $K$, and write $\mathfrak{q}^r = \alpha \cdot \mathcal{O}_K $. Then \[\alpha^a \cdot (\alpha^\tau)^b \equiv \lambda^{12r}(\sigma_\mathfrak{q}) \pmod{\mathfrak{p}_0}. \] 
\end{proposition}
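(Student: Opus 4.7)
The plan is to pass from $\lambda^{12}$, which Proposition~\ref{sigdef} tells us is a finite-order character of $G_K$ unramified outside $p$ and with prescribed restrictions to inertia at $\mathfrak{p}_0$ and $\tau(\mathfrak{p}_0)$, to the associated character of the idele class group via global class field theory, and then evaluate it on the principal idele coming from $\alpha$. Concretely, global Artin reciprocity attaches to $\lambda^{12}$ a character $\tilde{\lambda}: \mathbb{A}_K^\times / K^\times \to \mathbb{F}_p^\times$, with local components $\tilde{\lambda}_v$ on $K_v^\times$, satisfying $\lambda^{12}(\sigma_\mathfrak{q}) = \tilde{\lambda}_\mathfrak{q}(\pi_\mathfrak{q})$ for any uniformizer $\pi_\mathfrak{q}$ at a prime $\mathfrak{q} \nmid p$.

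Next, I would use the hypothesis $\mathfrak{q}^r = \alpha \cdot \mathcal{O}_K$ together with the fact that the principal idele $(\alpha)_v$ lies in the kernel of $\tilde{\lambda}$, giving $\prod_v \tilde{\lambda}_v(\alpha) = 1$. Because $\alpha$ is a unit at every finite place other than $\mathfrak{q}$ and $\tilde{\lambda}_v$ is unramified at every finite place $v \nmid p$, and because $\lambda^{12}$ is trivial at the infinite places, this collapses to
\[
\lambda^{12r}(\sigma_\mathfrak{q}) \; = \; \tilde{\lambda}_\mathfrak{q}(\pi_\mathfrak{q}^r) \; = \; \tilde{\lambda}_\mathfrak{q}(\alpha) \; = \; \prod_{v \mid p} \tilde{\lambda}_v(\alpha)^{-1}.
\]

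Finally, I would compute each surviving local factor using local class field theory. The restriction $\tilde{\lambda}_{\mathfrak{p}_0}|_{\mathcal{O}_{K_{\mathfrak{p}_0}}^\times}$ is the inertia character $\chi_p^a$, and with the appropriate Frobenius convention the mod $p$ cyclotomic character sends a local unit to the inverse of its reduction, yielding $\tilde{\lambda}_{\mathfrak{p}_0}(\alpha)^{-1} \equiv \alpha^a \pmod{\mathfrak{p}_0}$; the analogous computation at $\tau(\mathfrak{p}_0)$ produces $(\alpha \bmod \tau(\mathfrak{p}_0))^b \equiv (\alpha^\tau)^b \pmod{\mathfrak{p}_0}$, using the identity $\alpha \bmod \tau(\mathfrak{p}_0) = \alpha^\tau \bmod \mathfrak{p}_0$ in $\mathbb{F}_p$. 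In the inert case $\mathfrak{p}_0 = \tau(\mathfrak{p}_0)$ and $a = b$, and the single local factor reduces via the local norm to $(\alpha \alpha^\tau)^a$, consistent with the formula. The main obstacle is purely bookkeeping: reconciling the choices of arithmetic versus geometric Frobenius in Artin reciprocity and the corresponding sign of the mod $p$ cyclotomic character on local units so that the exponents $a$ and $b$ appear with the positive sign stated in the proposition; the underlying class field theory input is standard.
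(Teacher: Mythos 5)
Your argument is correct, and it is worth noting that the paper does not actually reprove this statement: it deduces it directly from Momose's Lemma~1 (equivalently \citep[Proposition~2.2]{irred} or \citep[Proposition~2.6]{Criteres}), only translating notation via $\mathcal{N}_{\mathbf{s}}(\alpha)=\alpha^a(\alpha^\tau)^b$. What you have written is in substance a reconstruction of the proof of those cited results: view $\lambda^{12}$ as a character of the idele class group, evaluate it on the principal idele of $\alpha$, kill every local contribution away from $p$ and $\mathfrak{q}$ using Proposition~\ref{sigdef} (unramifiedness at finite places prime to $p$ and at the infinite places, plus the fact that $\alpha$ is a unit outside $\mathfrak{q}$), identify the factor at $\mathfrak{q}$ with $\lambda^{12r}(\sigma_\mathfrak{q})$ via unramifiedness there, and compute the surviving factors above $p$ from the Lubin--Tate description of the cyclotomic character on local units. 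The two delicate points are exactly the ones you flag: the normalization of local reciprocity must be chosen compatibly with $\lambda^{12}(\sigma_\mathfrak{q})=\tilde{\lambda}_\mathfrak{q}(\pi_\mathfrak{q})$, which determines whether a local unit $u$ at $\mathfrak{p}\mid p$ contributes $\bar{u}^{\,s_\mathfrak{p}}$ or $\bar{u}^{\,-s_\mathfrak{p}}$; and in the inert case the single local factor is a norm from the residue field $\mathbb{F}_{p^2}$, which matches $\alpha^a(\alpha^\tau)^b$ only because $a=b$ there. You address both. In short, your route is self-contained where the paper defers to the literature; the trade-off is that the cited references have already pinned down the sign conventions, which is precisely the bookkeeping you identify as the remaining obstacle.
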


\begin{proof}  This is a direct consequence of \citep[Lemma~1]{Momose}. We refer also to \citep[Proposition~2.2]{irred} and \citep[Proposition~2.6]{Criteres} for statements that use notation closer to ours. Indeed, following \citep[Proposition~2.2]{irred}, the quantity $\mathcal{N}_\mathbf{s}(\alpha)$ is $\alpha^a \cdot (\alpha^\tau)^b$, and the prime $\mathfrak{q}$ is the unique prime in the support of $\alpha$.
\end{proof}

This proposition can then be used to prove the following result, which will be crucial in our proof of Theorem~\ref{Thm1}.

\begin{corollary}[{\citep[Corollary~3.2]{irred}}]\label{eps}
Let $K$ be a real quadratic field. Let $\epsilon$ be a fundamental unit of $K$. Suppose \[ p \nmid \mathrm{Norm}_{K / \Q}(\epsilon^{12}-1).\] Then the isogeny signature of $(E,V_p)$ is constant.
\end{corollary}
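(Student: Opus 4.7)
The plan is to argue by contrapositive: I would suppose the isogeny signature $(a,b)$ of $(E,V_p)$ is non-constant, so $(a,b) \in \{(12,0),(0,12)\}$, and deduce that $p \mid \mathrm{Norm}_{K/\Q}(\epsilon^{12}-1)$.

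The starting point is Proposition \ref{Keyprop}, applied to any prime $\mathfrak{q}$ of $K$ coprime to $p$. Writing $\mathfrak{q}^r = \alpha \mathcal{O}_K$, the proposition gives
$$\alpha^a (\alpha^\tau)^b \equiv \lambda^{12r}(\sigma_\mathfrak{q}) \pmod{\mathfrak{p}_0}.$$
The right-hand side depends only on $\mathfrak{q}$, whereas the left-hand side depends on the chosen generator. The key observation is that any two generators of $\mathfrak{q}^r$ differ by a unit $u \in \mathcal{O}_K^\times$, so applying the proposition to $\alpha$ and to $u\alpha$ and comparing forces the consistency relation
$$u^a (u^\tau)^b \equiv 1 \pmod{\mathfrak{p}_0} \quad \text{for every } u \in \mathcal{O}_K^\times.$$

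I would then specialise to $u = \epsilon$. Since $K$ is real quadratic, $\epsilon \cdot \epsilon^\tau = \mathrm{Norm}_{K/\Q}(\epsilon) \in \{\pm 1\}$, and hence $\epsilon^\tau = \pm \epsilon^{-1}$. Substituting yields $(\pm 1)^b \, \epsilon^{a-b} \equiv 1 \pmod{\mathfrak{p}_0}$. For either non-constant signature, $b$ is even and $a - b = \pm 12$, so the sign collapses and the congruence simplifies to $\epsilon^{12} \equiv 1 \pmod{\mathfrak{p}_0}$. Equivalently $\mathfrak{p}_0 \mid \epsilon^{12} - 1$ in $\mathcal{O}_K$, and applying $\mathrm{Norm}_{K/\Q}$ yields $p \mid \mathrm{Norm}_{K/\Q}(\epsilon^{12}-1)$, which is the desired contrapositive.

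There is no serious obstacle here: the argument simply exploits the freedom of generator in Proposition \ref{Keyprop}, combined with the elementary relation $\mathrm{Norm}(\epsilon) = \pm 1$ in a real quadratic field. The only small point to watch is the parity of $b$ and the resulting sign $(\pm 1)^b$, but because $b \in \{0, 12\}$ in both non-constant cases, this causes no difficulty. One should also briefly remark that $\epsilon^{12} - 1 \neq 0$ since a fundamental unit of a real quadratic field is not a root of unity, so the norm divisibility is a nontrivial condition on $p$.
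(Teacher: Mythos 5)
Your argument is correct, and it reaches the same pivotal congruence as the paper --- namely $\epsilon^{12}\equiv 1\pmod{\mathfrak{p}_0}$, followed by taking norms --- but it sources that congruence differently. The paper simply invokes \citep[Corollary~3.2]{irred}, which states directly that $\mathcal{N}_{\mathbf{s}}(u)=u^a(u^\tau)^b\equiv 1\pmod{\mathfrak{p}_0}$ for units $u$, whereas you extract this unit relation from Proposition~\ref{Keyprop} by comparing two generators $\alpha$ and $u\alpha$ of $\mathfrak{q}^r$. That deduction is formally valid as the proposition is phrased (the congruence is asserted for an arbitrary generator, and the underlying result, Momose's Lemma~1, does hold for every generator), so your proof is self-contained modulo Proposition~\ref{Keyprop}. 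Be aware, though, that the generator-independence you are exploiting is not a free lunch: it is logically equivalent to the unit congruence itself, so you are really unpacking content already built into the statement of Proposition~\ref{Keyprop} rather than proving something new from the prime-by-prime data. Two smaller points in your favour: your use of $\epsilon^\tau=\pm\epsilon^{-1}$ collapses both non-constant signatures to the single congruence $\epsilon^{12}\equiv 1\pmod{\mathfrak{p}_0}$, which lets you avoid the paper's final observation that $\mathrm{Norm}_{K/\Q}(\epsilon^{12}-1)=\mathrm{Norm}_{K/\Q}((\epsilon^\tau)^{12}-1)$; and you correctly note that $\epsilon^{12}-1\neq 0$ (a fundamental unit of a real quadratic field is not a root of unity), so the divisibility conclusion is non-vacuous. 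The only bookkeeping you should make explicit is that dividing the two instances of the congruence is legitimate because $\alpha$ is invertible modulo $\mathfrak{p}_0$ (since $\mathfrak{q}\nmid p$), and that such a prime $\mathfrak{q}\nmid p$ exists (any prime above $2$ or $3$ will do, as $p\geq 17$).
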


\begin{proof} Suppose the isogeny  signature of $(E,V_p)$ is $(12,0)$ or $(0,12)$ (i.e. non-constant). We will show that $p \mid \mathrm{Norm}_{K / \Q}(\epsilon^{12}-1)$. In the notation of \citep{irred}, $\mathcal{N}_{\mathbf{s}}(\epsilon) = \epsilon^{12}$ or $(\epsilon^\tau)^{12}$ (according to whether the isogeny signature is $(12,0)$ or $(0,12)$), and applying \citep[Corollary~3.2]{irred} gives that \[ \epsilon^{12} \equiv 1 \pmod{\mathfrak{p}_0} \quad \text{or} \quad (\epsilon^\tau)^{12} \equiv 1 \pmod{\mathfrak{p}_0}.\] Taking norms, we have that \[ p \mid \mathrm{Norm}_{K / \Q}(\epsilon^{12}-1)  \quad \text{or} \quad p \mid \mathrm{Norm}_{K / \Q}((\epsilon^\tau)^{12}-1). \]  The result follows since $\mathrm{Norm}_{K/ \Q}(\epsilon^{12}-1) = \mathrm{Norm}_{K/ \Q}((\epsilon^\tau)^{12}-1)$.
\end{proof}

This result will often allow us to focus on the case of a constant isogeny signature.  We will need different methods to deal with the non-constant isogeny signatures if $K$ is imaginary quadratic or if the prime factors of $\mathrm{Norm}_{K / \Q}(\epsilon^{12}-1)$ are large.

We can now also obtain a bound on $p$. The following result is similar to \citep[Theorem~2]{irred}. The key difference is that we have removed a factor of $2$ from the exponent of $3$ in the bound.

\begin{theorem}\label{Thm3} Let $K$ be a real quadratic field and let $\epsilon$ be a fundamental unit of $K$. Write $n$ for the exponent of the class group of $K$. Let $p$ be a prime such that there exists an elliptic curve $E / K$ which admits a $K$-rational $p$-isogeny and is semistable at all primes of $K$ above $p$. Then  either 
\begin{itemize}
\item $p$ ramifies in $K$; or
\item $p < (1+3^{6n})^2$; or
\item $p$ splits in $K$ and $p \mid\mathrm{Norm}_{K / \Q}(\epsilon^{12}-1)$.
\end{itemize}
Moreover, if $n=1$ then $p \equiv 1 \pmod{12}$ or $p \leq 19$.
\end{theorem}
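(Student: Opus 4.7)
The plan is to reduce the main statement to the case of a constant isogeny signature and then apply Proposition~\ref{Keyprop} at a prime of $K$ above $3$. Assume the first and third bullets fail: $p$ is unramified in $K$, and either $p$ is inert, or $p$ splits with $p \nmid \mathrm{Norm}_{K/\Q}(\epsilon^{12}-1)$. In the split subcase, Corollary~\ref{eps} gives that the isogeny signature of $(E,V_p)$ is constant; in the inert subcase this is automatic, since $\mathfrak{p}_0 = \tau(\mathfrak{p}_0)$. By Lemma~\ref{sig2}(ii), I may replace $(E,V_p)$ by $(E/V_p, E[p]/V_p)$ if necessary, and thus assume the signature is $(0,0)$. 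Proposition~\ref{sigdef} then shows that $\lambda^{12}$ is unramified at every prime of $K$, so by class field theory it factors through the class group of $K$; since this group has exponent $n$, I conclude that $\lambda^{12n} = 1$.

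Next, fix a prime $\mathfrak{q}$ of $K$ above $q=3$, which is coprime to $p$ as $p \geq 17$. The upper-triangular form of $\overline{\rho}_{E,p}(\sigma_\mathfrak{q})$ shows that $\lambda(\sigma_\mathfrak{q})$ coincides, modulo a prime above $p$, with one of the Frobenius eigenvalues $\alpha_\mathfrak{q}, \beta_\mathfrak{q}$ of $E$ at $\mathfrak{q}$, which satisfy $\alpha_\mathfrak{q} \beta_\mathfrak{q} = n_\mathfrak{q}$ and, by the Hasse--Weil bound, $|\alpha_\mathfrak{q}|, |\beta_\mathfrak{q}| \leq \sqrt{n_\mathfrak{q}}$. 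Combining this with $\lambda^{12n}(\sigma_\mathfrak{q}) = 1$ forces $p$ to divide the nonzero rational integer $(\alpha_\mathfrak{q}^{12n}-1)(\beta_\mathfrak{q}^{12n}-1)$, giving $p \leq (n_\mathfrak{q}^{6n}+1)^2$. When $3$ splits or ramifies in $K$ we have $n_\mathfrak{q} = 3$ and the bound $p < (1+3^{6n})^2$ is immediate. When $3$ is inert, $\mathfrak{q} = (3)$ is principal and the $r=1$ case of Proposition~\ref{Keyprop} sharpens the relation to $\lambda^{12}(\sigma_\mathfrak{q}) = 1$, yielding $p \leq (1+3^{12})^2$, which is bounded by $(1+3^{6n})^2$ whenever $n \geq 2$.

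For the \emph{moreover} claim, the case $n=1$ gives $\lambda^{12} = 1$ outright, so the image of $\lambda$ in $\mathbb{F}_p^\times$ has order dividing $d := \gcd(12, p-1)$. When $p \not\equiv 1 \pmod{12}$, one has $d \leq 6$, and the Frobenius-eigenvalue argument above applies with the smaller exponent $d$ in place of $12n$. Choosing a prime $\mathfrak{q}$ above $q=2$, the trace of Frobenius $a_\mathfrak{q} \in \Z$ is restricted by $|a_\mathfrak{q}| \leq 2\sqrt{n_\mathfrak{q}}$ to finitely many integers, and a short case analysis of $(\alpha_\mathfrak{q}^d - 1)(\beta_\mathfrak{q}^d - 1)$ in each residue class of $p$ modulo $12$ eliminates all $p > 19$. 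The main technical obstacle is the residual case $n = 1$ with $3$ inert in $K$: the direct argument only gives $(1+3^{12})^2$, which exceeds the desired $(1+3^6)^2$, and closing this gap requires appealing to the moreover conclusion together with a separate treatment of the residue class $p \equiv 1 \pmod{12}$.
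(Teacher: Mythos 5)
Your reduction to the constant signature $(0,0)$ and the deduction $\lambda^{12n}=1$ match the paper, but from that point on you diverge onto a route with a genuine gap. Your bound rests on the claim that $\lambda(\sigma_\mathfrak{q})$ is congruent to a Frobenius eigenvalue of $E$ at $\mathfrak{q}\mid 3$ satisfying the Hasse--Weil bound $\abs{\alpha_\mathfrak{q}},\abs{\beta_\mathfrak{q}}\leq\sqrt{n_\mathfrak{q}}$. That description of $\lambda(\sigma_\mathfrak{q})$ is only valid when $E$ has potentially \emph{good} reduction at $\mathfrak{q}$; if $E$ has potentially multiplicative reduction there, the relevant eigenvalues are (up to sign) $1$ and $n_\mathfrak{q}$, the Hasse bound fails, and all one gets is $\lambda^2(\sigma_\mathfrak{q})\equiv 1$ or $n_\mathfrak{q}^2\pmod{p}$ (Proposition \ref{MultProp}); in the first alternative the relation $\lambda^{12n}(\sigma_\mathfrak{q})=1$ is vacuous and yields no bound on $p$ at all. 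You never address the reduction type at your auxiliary prime. The paper is explicit that this dichotomy is the crux (see the discussion before Section 4.1), and ruling out the bad alternative in the constant-signature case requires the formal immersion machinery of Proposition \ref{immersion}/Proposition \ref{red}, which you do not invoke. The same objection applies to your treatment of the prime above $2$ in the \emph{moreover} part.

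Separately, you concede that in the case $n=1$ with $3$ inert your argument only reaches $p\leq(1+3^{12})^2$ rather than $(1+3^6)^2$, and that closing this for $p\equiv 1\pmod{12}$ "requires a separate treatment" which you do not supply; so the stated bound is simply not proved in that case. The paper avoids both problems by a different mechanism: it passes to the field $M$ cut out by $\lambda^2$, of absolute degree dividing $12n$, twists $E$ by the at-most-quadratic character $\lambda|_{G_M}$ to produce a point of order $p$ over $M$, and applies the uniform torsion bounds of \citep{DKSS} (Oesterl\'e's bound $p<(1+3^{[M:\Q]/2})^2$ for the main statement, and their Theorem 1.2 over fields of degree $4$ or $6$ for the \emph{moreover} clause). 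This route makes no reference to reduction types at auxiliary primes and gives the exponent $6n$ uniformly, including the $n=1$, $3$-inert case. To salvage your approach you would need to add Proposition \ref{red} to guarantee a prime of potentially good reduction above $3$ (and above $2$), and find a genuinely new argument for the $n=1$, $3$-inert, $p\equiv 1\pmod{12}$ case.
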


\begin{proof} We assume $p \geq 17$ with $p$ unramified in $K$. We  let $V_p$ denote the kernel of the $K$-rational $p$-isogeny of $E$, and write $\lambda$ for the isogeny character of $(E,V_p)$. We assume that if $p$ splits in $K$ then $p \nmid\mathrm{Norm}_{K / \Q}(\epsilon^{12}-1)$ which ensures that the isogeny signature of $E$ is constant (by Corollary \ref{eps}). By interchanging $(E,V_p)$ with $(E/V_p, E[p]/V_p)$ if necessary, we may assume that the isogeny signature of $(E,V_p)$ is $(0,0)$. So $\lambda^{12}$ is everywhere unramified and it follows that $\lambda^{12n} = 1$. 

Let $M$ denote the field cut out by $\lambda^2$ (the fixed field of the kernel of $\lambda^2$), which will be an extension of $K$ of degree dividing $6n$, and therefore have absolute degree dividing $12n$. Then $\theta := \lambda |_{G_M}$ will either be trivial or a quadratic character. If $\theta = 1$ then $E$ has a point of order $P$ defined over $M$. Otherwise, we twist $E$ (viewed as a curve over $M$) by the quadratic character $\theta$, to obtain an elliptic curve with a point of order $p$ defined over $M$. We then apply Oesterl\'e's torsion bound \citep[Theorem~6.1]{DKSS}, to obtain  \[p < (1+3^{[M : \Q] /2 })^2  \leq (1+3^{6n})^2. \]

If $n = 1$, then we can obtain improved results. We have $\lambda^{12}=1$ and also $\lambda^{p-1} = 1$. So $\lambda^{\gcd(12,p-1)} = 1$. Therefore, if $p \not\equiv 1 \pmod{12}$ then $\lambda^4=1$ or $\lambda^6 = 1$. Applying the same argument as above, we conclude that there exists an elliptic curve with a point of order $p$ over a field of absolute degree dividing $4$ or $6$. Applying the torsion bounds of \citep[Theorem~1.2]{DKSS}, we conclude that $ p \leq 19$ or $p = 37$, and since $p \not\equiv 1 \pmod{12}$, we must have $p \leq 19$.
\end{proof}

\begin{remark} The idea used in this proof of applying a quadratic twist to reduce the degree of the field extension being considered is also used in \citep[p.~888]{realquad}.
\end{remark}

Although it is hidden within its proof, Corollary \ref{eps} (and consequently Theorem~\ref{Thm3}) relies on the fact that an elliptic curve will have a prime $\mathfrak{q}$ of potentially good reduction. In what follows, we will want to choose specific primes $\mathfrak{q}$, and we will not know whether they are of potentially good or of potentially multiplicative reduction for $E$. This leads us to separate our analysis into two cases.


\subsection{Primes of potentially good reduction}


Let $\mathfrak{q}$ be a prime of potentially good reduction for $E$. We will write $q$ for the rational prime below $\mathfrak{q}$. Let $r$ be the order of the class of $\mathfrak{q}$ in the class group of $K$, and write $\mathfrak{q}^r = \alpha \cdot \mathcal{O}_K$ (like in Proposition \ref{Keyprop}). We start by recalling some facts about the Frobenius element $\sigma_\mathfrak{q}$ and its action on the $p$-adic Tate module of $E$, following \citep[pp.~10-11]{Criteres}. The characteristic polynomial of Frobenius for $E$ at $\mathfrak{q}$ is given by \[P_\mathfrak{q}(X) = X^2-a_\mathfrak{q}(E)X + n_\mathfrak{q}. \] Let $\beta_1, \beta_2$ denote the roots of $P_\mathfrak{q}(X)$. Each root has absolute value $\sqrt{n_\mathfrak{q}}$. The two roots are complex conjugates, and we write $L = \Q(\beta_1)$ for the field they generate. The field $L$ is either $\Q$ or an imaginary quadratic field. Let $\mathcal{P}$ denote a prime of $L$ above $p$. Then $\beta_i \pmod{\mathcal{P}} \in \mathbb{F}_p^\times$ for $i \in \{1,2\}$, and moreover \[ \{ \lambda(\sigma_\mathfrak{q}), ( \chi_p \lambda^{-1}) (\sigma_\mathfrak{q})  \} = \{\beta_1 \Mod{\mathcal{P}}, \; \beta_2 \Mod{\mathcal{P}} \}. \]
The following result is a direct consequence of Proposition \ref{Keyprop}. We write $\mathrm{Res}$ for the resultant of two polynomials.

\begin{proposition}[{\citep[Lemma~3.1]{irred}}]\label{ResProp} Let $\lambda$ be the isogeny character of $(E,V_p)$ with isogeny signature $(a,b)$. Let $\mathfrak{q} \nmid p$ be a prime of potentially good reduction for $E$, let $r$ be the order of the class of $\mathfrak{q}$ in the class group of $K$, and write $ \mathfrak{q}^r = \alpha \cdot \mathcal{O}_K $. Then \[\mathfrak{p}_0 \mid \mathrm{Res}(P_\mathfrak{q}(X),X^{12r}-\alpha^a(\alpha^\tau)^b). \] If $(a,b) = (0,0)$, then \[ p \mid \mathrm{Res}(P_\mathfrak{q}(X),X^{12r}-1). \] 
\end{proposition}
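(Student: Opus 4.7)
The plan is to combine Proposition \ref{Keyprop} with the description of the Frobenius action on the $p$-adic Tate module recalled just before the statement, and then repackage the resulting congruence as a common-root condition on the two polynomials.

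First, since $\mathfrak{q}$ is a prime of potentially good reduction with $\mathfrak{q} \nmid p$, we have the polynomial $P_\mathfrak{q}(X) = X^2 - a_\mathfrak{q}(E) X + n_\mathfrak{q} \in \mathbb{Z}[X]$ with roots $\beta_1,\beta_2$ in the number field $L$. Fixing a prime $\mathcal{P}$ of $L\cdot K$ lying above $\mathfrak{p}_0$, the discussion preceding Proposition \ref{ResProp} gives
\[
\{\lambda(\sigma_\mathfrak{q}),\ (\chi_p\lambda^{-1})(\sigma_\mathfrak{q})\} \;=\; \{\beta_1 \Mod{\mathcal{P}},\ \beta_2 \Mod{\mathcal{P}}\},
\]
so after relabelling we may assume $\lambda(\sigma_\mathfrak{q}) \equiv \beta_1 \pmod{\mathcal{P}}$.

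Next I would raise this congruence to the $12r$-th power and invoke Proposition \ref{Keyprop}. This gives
\[
\beta_1^{12r} \;\equiv\; \lambda^{12r}(\sigma_\mathfrak{q}) \;\equiv\; \alpha^a (\alpha^\tau)^b \pmod{\mathcal{P}}.
\]
Thus $\beta_1 \Mod{\mathcal{P}}$ is a common root of the reductions of $P_\mathfrak{q}(X)$ and $X^{12r} - \alpha^a(\alpha^\tau)^b$ in the residue field at $\mathcal{P}$.

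The final step is to rewrite this as a divisibility statement about the resultant. Factoring $P_\mathfrak{q}(X) = (X-\beta_1)(X-\beta_2)$, we have
\[
\mathrm{Res}\bigl(P_\mathfrak{q}(X),\, X^{12r} - \alpha^a(\alpha^\tau)^b\bigr) \;=\; \bigl(\beta_1^{12r} - \alpha^a(\alpha^\tau)^b\bigr)\bigl(\beta_2^{12r} - \alpha^a(\alpha^\tau)^b\bigr),
\]
which vanishes modulo $\mathcal{P}$ by the previous step. Since this resultant is a symmetric function of $\beta_1,\beta_2$ with coefficients in $\mathcal{O}_K$, it actually lies in $\mathcal{O}_K$, and its vanishing modulo $\mathcal{P}$ forces it to lie in $\mathfrak{p}_0 = \mathcal{P} \cap \mathcal{O}_K$. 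This proves the first assertion. For the final assertion, when $(a,b) = (0,0)$ we have $\alpha^a(\alpha^\tau)^b = 1 \in \mathbb{Z}$, so the resultant lies in $\mathbb{Z}$ and divisibility by $\mathfrak{p}_0$ is equivalent to divisibility by $p$.

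No step here is a genuine obstacle; the only thing to watch is the bookkeeping of which ring contains which quantity, ensuring the resultant descends from $\mathcal{O}_L\cdot\mathcal{O}_K$ to $\mathcal{O}_K$ (and to $\mathbb{Z}$ in the constant-signature case) so that the congruence modulo $\mathcal{P}$ yields the desired divisibility by $\mathfrak{p}_0$ (respectively $p$).
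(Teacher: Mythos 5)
Your proposal is correct and follows exactly the route the paper intends: the paper gives no separate proof, stating only that the result is a direct consequence of Proposition \ref{Keyprop} together with the recalled fact that $\{\lambda(\sigma_\mathfrak{q}), (\chi_p\lambda^{-1})(\sigma_\mathfrak{q})\}$ is the reduction of $\{\beta_1,\beta_2\}$ modulo a prime above $p$, which is precisely the combination you spell out. Your bookkeeping of which ring the resultant lives in, and the reduction of $\mathfrak{p}_0$-divisibility to $p$-divisibility when $(a,b)=(0,0)$, is also correct.
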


The problem with applying this proposition is that the trace of Frobenius, $a_\mathfrak{q}(E)$, is unknown. However, we know that $ \abs{ a_\mathfrak{q}(E)} \leq 2 \sqrt{n_\mathfrak{q}}$. We define \begin{equation}\label{Aq} A_\mathfrak{q} := \{ a \in \Z :   \abs{a} \leq 2 \sqrt{n_\mathfrak{q}} \}. \end{equation}
Then $a_\mathfrak{q}(E) \in A_\mathfrak{q}$. The set $A_\mathfrak{q}$ only depends on $n_\mathfrak{q}$ and is therefore independent of the choice of prime $\mathfrak{q} \mid q$. We will therefore also write $A_q$ for this set.

\begin{remark} Instead of using the set $A_\mathfrak{q}$ defined in (\ref{Aq}), it is possible to run through all elliptic curves defined over the residue field of $\mathfrak{q}$ to compute a set of possible values for $a_\mathfrak{q}(E)$. This is possible because $E$ acquires good reduction at a totally ramified extension of the completion of $K$ at $\mathfrak{q}$. This idea is used, for example, in (parts of) \citep{Banwait}. However, this slows down the computations we will perform in Section 5, and we did not find it led to improved results in any of the cases we considered.
\end{remark}

Next, given an isogeny signature $(a,b) \ne (0,0)$, we  define \begin{equation*} R_\mathfrak{q} \coloneqq q \cdot \mathrm{lcm}_{a \in A_\mathfrak{q}}\left( \mathrm{Norm}_{K / \Q} \left( \mathrm{Res}(X^2 - aX + n_\mathfrak{q}, X^{12r}-\alpha^a (\alpha^\tau)^b )\right) \right).   \end{equation*}
If the isogeny signature $(a,b) = (0,0)$, then we simply define  \begin{equation}\label{Rq} R_\mathfrak{q} \coloneqq q \cdot \mathrm{lcm}_{a \in A_\mathfrak{q}}\left( \mathrm{Res}(X^2 - aX + n_\mathfrak{q}, X^{12r}-1)\right) .\end{equation}
In each case, $R_\mathfrak{q}$ is an integer. Also, $R_\mathfrak{q}$ is independent of the choice  of prime $\mathfrak{q}$ above $q$, and so we will also write $R_q$ for $R_\mathfrak{q}$.

\begin{corollary}\label{Rqcorol} Let $\lambda$ be the isogeny character of $(E,V_p)$ with isogeny signature $(a,b)$. Let $\mathfrak{q}$ be a prime of potentially good reduction for $E$. Then $p \mid R_\mathfrak{q}$. If $(a,b) = (0,0)$ then $R_\mathfrak{q} \neq 0$.
\end{corollary}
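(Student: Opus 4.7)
The strategy is to pass from Proposition \ref{ResProp}, which gives divisibility by the prime $\mathfrak{p}_0$ of $K$, to divisibility by the rational prime $p$, while handling the unknown trace $a_\mathfrak{q}(E)$ via the Hasse bound.

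The key arithmetic observation is that for any $\gamma \in \mathfrak{p}_0$, the norm $\mathrm{Norm}_{K / \Q}(\gamma) = \gamma \cdot \gamma^\tau$ lies in $\mathfrak{p}_0 \cap \Z = p\Z$, so $p \mid \mathrm{Norm}_{K / \Q}(\gamma)$. For a general isogeny signature $(a,b) \neq (0,0)$, Proposition \ref{ResProp} yields $\mathfrak{p}_0 \mid \mathrm{Res}(P_\mathfrak{q}(X),\, X^{12r} - \alpha^a (\alpha^\tau)^b) \in \mathcal{O}_K$. Taking norms, $p$ divides the term in the lcm defining $R_\mathfrak{q}$ corresponding to $a = a_\mathfrak{q}(E)$, which is a legitimate choice since $a_\mathfrak{q}(E) \in A_\mathfrak{q}$ by the Hasse bound. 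As $p$ is prime, this forces $p \mid R_\mathfrak{q}$. For the constant signature $(0,0)$, the polynomial $X^{12r} - 1$ already lies in $\Z[X]$, so Proposition \ref{ResProp} provides $p$-divisibility of the resultant directly, matching the simpler definition (\ref{Rq}) of $R_\mathfrak{q}$ — no norm is needed. Multiplying by the factor $q$ plays no role in either divisibility statement.

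For the nonvanishing claim in the $(0,0)$ case, I would factor each resultant as $\mathrm{Res}(X^2 - aX + n_\mathfrak{q},\, X^{12r} - 1) = (\beta_1^{12r} - 1)(\beta_2^{12r} - 1)$, where $\beta_1, \beta_2$ are the roots of $X^2 - aX + n_\mathfrak{q}$. Vanishing would force some $\beta_i$ to be a root of unity. If the discriminant $a^2 - 4 n_\mathfrak{q}$ is negative, $|\beta_i| = \sqrt{n_\mathfrak{q}} > 1$, which precludes this. If it is non-negative, the only real roots of unity are $\pm 1$; but $\beta_1 \in \{\pm 1\}$ forces $\beta_2 \in \{\pm n_\mathfrak{q}\}$ and hence $|a| = n_\mathfrak{q} + 1$, contradicting $|a| \leq 2\sqrt{n_\mathfrak{q}}$ since $(\sqrt{n_\mathfrak{q}} - 1)^2 > 0$ for $n_\mathfrak{q} \geq 2$. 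Thus every term in the lcm is nonzero, so $R_\mathfrak{q} \neq 0$. No single step here is a serious obstacle; the only care needed is keeping the two definitional cases of $R_\mathfrak{q}$ straight and verifying that the norm passage is only required when the isogeny signature is non-constant.
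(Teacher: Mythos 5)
Your argument for the case $\mathfrak{q} \nmid p$ is correct and follows the paper's route: apply Proposition \ref{ResProp}, pass to $p$-divisibility via the norm when the signature is not $(0,0)$ (noting that $\gamma \in \mathfrak{p}_0$ implies $\gamma\gamma^\tau \in \mathfrak{p}_0 \cap \Z = p\Z$), and observe that $a_\mathfrak{q}(E) \in A_\mathfrak{q}$ by the Hasse bound so that $p$ divides one term of the lcm and hence the lcm itself. Your nonvanishing argument is also fine, just slightly more case-heavy than necessary: rather than splitting on the sign of the discriminant, one can simply note that if both roots $\beta_1,\beta_2$ were roots of unity then so would be their product $\beta_1\beta_2 = n_\mathfrak{q} \geq 2$, which is absurd.

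There is, however, one genuine gap. The corollary places no hypothesis $\mathfrak{q} \nmid p$ on the prime of potentially good reduction, whereas Proposition \ref{ResProp} does assume $\mathfrak{q} \nmid p$; so your argument silently excludes the case $\mathfrak{q} \mid p$, i.e.\ $q = p$. Worse, you assert that ``multiplying by the factor $q$ plays no role in either divisibility statement,'' which is exactly backwards: the factor $q$ is inserted into the definition of $R_\mathfrak{q}$ precisely so that when $q = p$ one gets $p = q \mid R_\mathfrak{q}$ for free, without invoking Proposition \ref{ResProp} at all. (This matters in practice: in Step 1 of Section 5 the auxiliary primes $q_i$ are chosen without knowing $p$, so the possibility $q_i = p$ cannot be excluded in advance.) The fix is a one-line case distinction, but as written your proof both omits a case covered by the statement and misattributes the purpose of a term in the definition.
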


\begin{proof} If $\mathfrak{q} \nmid p$, then the main statement follows directly from Proposition \ref{ResProp}, and if $\mathfrak{q} \mid p$, then $p = q$ and so $p \mid R_\mathfrak{q}$ too, which is why we have included a factor of $q$ in the definition of $R_\mathfrak{q}$. Finally, if $(a,b) = (0,0)$ and $R_\mathfrak{q} = 0$, then for some $a \in A_\mathfrak{q}$, the roots of $X^2-aX+n_\mathfrak{q}$ (which are complex conjugate since $a \in A_\mathfrak{q}$) would both be roots of unity, and therefore their product, $n_\mathfrak{q}$, would also be a root of unity, a contradiction.
\end{proof}

The integer $R_\mathfrak{q}$ is independent of $p$. If $\mathfrak{q}_1, \dots, \mathfrak{q}_t$ are several primes of potentially good reduction for $E$, then \[ p \mid \gcd(R_{\mathfrak{q}_1}, \dots, R_{\mathfrak{q}_t}). \]
As we will see in Section 5, this idea will allow us to obtain a good bound on $p$. However, we have not yet used all the information at our disposal. We will now work with a fixed prime $p$ that we would like to eliminate and we suppose that $\mathfrak{q} \nmid p$. We first note that we can cut down the possibilities for $a_\mathfrak{q}(E)$. The roots of the characteristic polynomial of Frobenius reduce to elements of $\mathbb{F}_p^\times$, and so we have that \[ a_\mathfrak{q}(E)^2 - 4 \cdot n_\mathfrak{q} \; \; \text{ is a square mod } p. \] We define \begin{equation} A^{(p)}_\mathfrak{q} := \{ a \in \Z :  \abs{a} \leq 2 \sqrt{n_\mathfrak{q}} \text{ and } a^2-4n_\mathfrak{q} \Mod{p} \in \mathbb{F}_p^2  \}.  \end{equation} We have that $a_\mathfrak{q}(E) \in A^{(p)}_\mathfrak{q}$. Again,  $A^{(p)}_\mathfrak{q}$ is independent of the choice of prime $\mathfrak{q} \mid q$ and so we will also write $A^{(p)}_q$ for this set.

Now, by Proposition \ref{Keyprop}, we have that, \[(\lambda(\sigma_\mathfrak{q}))^{12r} = \alpha^a(\alpha^\tau)^b \pmod{\mathfrak{p}_0}, \] and this is then used to conclude that $\mathfrak{p}_0 \mid \mathrm{Res}(P_\mathfrak{q}(X),X^{12r}-\alpha^a(\alpha^\tau)^b)$, which gives Proposition \ref{ResProp}. However, recalling that $\chi_p(\sigma_\mathfrak{q}) \equiv n_\mathfrak{q} \pmod{p}$, we can also see that \[ \left((\chi_p\lambda^{-1})(\sigma_\mathfrak{q})\right)^{12r} = \chi_p(\sigma_\mathfrak{q})^{12r} \cdot \left(\lambda(\sigma_\mathfrak{q})^{12r}\right)^{-1} = \frac{n_\mathfrak{q}^{12r}}{\alpha^a (\alpha^\tau)^b} \pmod{\mathfrak{p}_0}. \] Then $n_\mathfrak{q}^{12r} = \mathrm{Norm}_{K / \Q}(\alpha)^{12} = (\alpha \alpha^\tau)^{12}$. So \[ \left((\chi_p\lambda^{-1})(\sigma_\mathfrak{q})\right)^{12r} = \alpha^{12-a} (\alpha^\tau)^{12-b} \pmod{\mathfrak{p}_0}. \] An alternative way of seeing this is by swapping $(E,V_p)$ with $(E/V_p,E[p]/V_p)$ and using Proposition \ref{Keyprop} along with Lemma \ref{sig1}.

For $a \in A^{(p)}_\mathfrak{q}$, let $\{\gamma_{a,1}, \gamma_{a,2} \} \subset \mathbb{F}_p^\times$ denote the reductions of the roots of  $X^2 - aX+n_\mathfrak{q}$. Since $a_\mathfrak{q}(E) \in A^{(p)}_\mathfrak{q}$, we must have that  \[ \{\gamma_{a,1}, \gamma_{a,2} \}  = \{ \lambda(\sigma_\mathfrak{q}), (\chi_p \lambda^{-1})(\sigma_\mathfrak{q}) \} \text{ for some } a \in A^{(p)}_\mathfrak{q}. \] 

\begin{lemma}\label{pcrit} Let $(E,V_p)$ have isogeny signature $(a,b)$. Let $\mathfrak{q} \nmid p$ be a prime of $K$ of potentially good reduction for $E$, let $r$ be the order of the class of $\mathfrak{q}$ in the class group of $K$, and write $ \mathfrak{q}^r = \alpha \cdot \mathcal{O}_K$. Then for some $a \in A^{(p)}_\mathfrak{q}$, (at least) one of the following two conditions holds: \begin{enumerate}[(i)]
\item $\gamma_{a,1}^{12r} = \alpha^a(\alpha^\tau)^b \pmod{\mathfrak{p}_0}$ and $\gamma_{a,2}^{12r} =  \alpha^{12-a} (\alpha^\tau)^{12-b} \pmod{\mathfrak{p}_0}$; or
\item  $\gamma_{a,2}^{12r} = \alpha^a(\alpha^\tau)^b \pmod{\mathfrak{p}_0}$ and $\gamma_{a,1}^{12r} =  \alpha^{12-a} (\alpha^\tau)^{12-b} \pmod{\mathfrak{p}_0}$.
\end{enumerate}
If $(a,b)= (0,0)$ then these conditions simplify, and we have that for some $a \in A^{(p)}_\mathfrak{q}$:  \begin{enumerate}[(i)]
\item $\gamma_{a,1}^{12r} = 1 \pmod{p}$ and $\gamma_{a,2}^{12r} = n_\mathfrak{q}^{12r} \pmod{p}$; or
\item $\gamma_{a,2}^{12r} = 1 \pmod{p}$ and $\gamma_{a,1}^{12r} = n_\mathfrak{q}^{12r} \pmod{p}$.
\end{enumerate}
\end{lemma}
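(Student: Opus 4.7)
The plan is simply to unpack the information already assembled in the paragraphs preceding the lemma. Write $a_0 = a_\mathfrak{q}(E)$ for the trace of Frobenius at $\mathfrak{q}$. Since $E$ has potentially good reduction at $\mathfrak{q} \nmid p$, the discussion recalled at the start of Section 4.1 shows that $a_0 \in A^{(p)}_\mathfrak{q}$ and that the reductions $\{\gamma_{a_0,1}, \gamma_{a_0,2}\}$ of the roots of $P_\mathfrak{q}(X) = X^2 - a_0 X + n_\mathfrak{q}$ modulo a chosen prime of $L$ above $\mathfrak{p}_0$ coincide, as an unordered pair in $\mathbb{F}_p^\times$, with $\{\lambda(\sigma_\mathfrak{q}),\, (\chi_p \lambda^{-1})(\sigma_\mathfrak{q})\}$.

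Next, I would apply Proposition \ref{Keyprop} directly to $(E, V_p)$ to obtain $\lambda(\sigma_\mathfrak{q})^{12r} \equiv \alpha^a (\alpha^\tau)^b \pmod{\mathfrak{p}_0}$. For the companion congruence, I would proceed exactly as in the computation displayed just before the lemma: using $\chi_p(\sigma_\mathfrak{q}) \equiv n_\mathfrak{q} \pmod{p}$ together with $n_\mathfrak{q}^{12r} = \mathrm{Norm}_{K/\Q}(\alpha)^{12} = (\alpha \alpha^\tau)^{12}$, one finds
\[
(\chi_p \lambda^{-1})(\sigma_\mathfrak{q})^{12r} \equiv \alpha^{12-a}(\alpha^\tau)^{12-b} \pmod{\mathfrak{p}_0}.
\]
Alternatively, this follows by applying Proposition \ref{Keyprop} to the pair $(E/V_p, E[p]/V_p)$, whose isogeny character is $\chi_p \lambda^{-1}$ and whose isogeny signature is $(12-a, 12-b)$ by Lemma \ref{sig2}(ii).

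With both congruences in hand, the lemma follows by a simple case split on which element of $\{\gamma_{a_0,1}, \gamma_{a_0,2}\}$ equals $\lambda(\sigma_\mathfrak{q})$ modulo $\mathfrak{p}_0$: one ordering yields condition (i), the other yields condition (ii), in each case taking $a = a_0 \in A^{(p)}_\mathfrak{q}$. For the simplification when the isogeny signature is $(0,0)$, I would simply note that $\alpha^0 (\alpha^\tau)^0 = 1$ and $\alpha^{12}(\alpha^\tau)^{12} = (\alpha \alpha^\tau)^{12} = n_\mathfrak{q}^{12r}$, and both values now lie in $\mathbb{Z}$, so the congruences modulo $\mathfrak{p}_0$ may be read as congruences modulo $p$.

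The only subtle point — hardly an obstacle — is keeping track of which root of $P_\mathfrak{q}(X)$ corresponds to $\lambda(\sigma_\mathfrak{q})$: the set $\{\gamma_{a,1}, \gamma_{a,2}\}$ is unordered, and this intrinsic ambiguity is precisely what forces the dichotomy between (i) and (ii) in the conclusion.
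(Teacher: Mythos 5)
Your proposal is correct and follows essentially the same route as the paper: the lemma is presented there as an immediate consequence of the preceding discussion, which derives $\lambda(\sigma_\mathfrak{q})^{12r} \equiv \alpha^a(\alpha^\tau)^b$ from Proposition \ref{Keyprop} and the companion congruence for $(\chi_p\lambda^{-1})(\sigma_\mathfrak{q})^{12r}$ via $\chi_p(\sigma_\mathfrak{q}) \equiv n_\mathfrak{q}$ and $n_\mathfrak{q}^{12r} = (\alpha\alpha^\tau)^{12}$, with the (i)/(ii) dichotomy arising exactly from the unordered identification $\{\gamma_{a,1},\gamma_{a,2}\} = \{\lambda(\sigma_\mathfrak{q}), (\chi_p\lambda^{-1})(\sigma_\mathfrak{q})\}$. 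Your alternative derivation of the second congruence via $(E/V_p, E[p]/V_p)$ and Lemma \ref{sig2}(ii) is also the one the paper mentions in passing.
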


This gives us a strategy for eliminating a given prime $p$. Indeed, if for all $a \in A^{(p)}_\mathfrak{q}$, (at least) one of the conditions in (i) is not satisfied \textbf{and} (at least) one of the conditions in (ii) is not satisfied, then we obtain a contradiction.

\begin{remark} In \citep[p.~338]{Momose}, conditions analogous to (i) and (ii) of Lemma \ref{pcrit} in the case of isogeny signature $(0,0)$ are effectively combined to say that \[ \gamma_{a,1}^{12h} + \gamma_{a,2}^{12h} = 1 + n_\mathfrak{q}^{12r} \pmod{p},\] where $h$ is the class number of $K$. This restores a certain symmetry and is sufficient to bound $p$. However, combining the two conditions places fewer conditions on $p$ and reduces the chances of eliminating the prime $p$. 
\end{remark}


\subsection{Primes of potentially multiplicative reduction}


Let $\mathfrak{q}$ be a prime of potentially multiplicative reduction for $E$. As before, we will write $q$ for the rational prime below $\mathfrak{q}$, we let $r$ be the order of the class of $\mathfrak{q}$ in the class group of $K$, and write $\mathfrak{q}^r = \alpha \cdot \mathcal{O}_K$. We would like to obtain results analogous to Proposition \ref{ResProp} and Corollary \ref{Rqcorol} for primes of potentially multiplicative reduction. If the isogeny signature of $(E,V_p)$ is $(a,b)$, then we start by defining \begin{equation}\label{Mq} M_\mathfrak{q} \coloneqq q \cdot \mathrm{Norm}_{K / \Q} \left( (\alpha^a (\alpha^\tau)^b -1) \cdot (\alpha^a (\alpha^\tau)^b-n_\mathfrak{q}^{12r}) \right). \end{equation} The integer $M_\mathfrak{q}$ is independent of the choice of prime $\mathfrak{q} \mid q$, and so we will also write is as $M_q$.

\begin{proposition} \label{MultProp} Let $(E,V_p)$ have isogeny signature $(a,b)$. Let $\mathfrak{q}$ be a prime of $K$ of potentially multiplicative reduction for $E$. Then $p \mid M_\mathfrak{q}$. If $(a,b) = (12,0)$ or $(0,12)$ then $M_\mathfrak{q} \ne 0$.
\end{proposition}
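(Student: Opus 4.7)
The plan is to reduce everything to Proposition~\ref{Keyprop} by analyzing the local mod $p$ Galois representation at $\mathfrak{q}$ when $E$ has potentially multiplicative reduction, and then to invoke an easy argument about $\alpha$ not being a unit for the non-vanishing claim.

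First, I would dispose of the case $\mathfrak{q} \mid p$ immediately: here $q = p$, so the factor of $q$ in the definition of $M_\mathfrak{q}$ in (\ref{Mq}) ensures $p \mid M_\mathfrak{q}$. So I may assume $\mathfrak{q} \nmid p$, which lets me apply Proposition~\ref{Keyprop} (which does not require any reduction hypothesis at $\mathfrak{q}$, only $\mathfrak{q} \nmid p$) to obtain the congruence
\[ \alpha^{a}(\alpha^{\tau})^{b} \equiv \lambda^{12r}(\sigma_{\mathfrak{q}}) \pmod{\mathfrak{p}_0}. \]
The core of the proof is then to show that
\[ \lambda^{12r}(\sigma_{\mathfrak{q}}) \in \{1,\, n_{\mathfrak{q}}^{12r}\} \pmod{\mathfrak{p}_0}. \]

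To establish this, I would invoke the theory of the Tate curve. After a quadratic twist by some (possibly ramified) character $\theta$ of order dividing $2$, the curve $E$ acquires split multiplicative reduction at $\mathfrak{q}$, and so $E\otimes\theta$ is analytically uniformized as $\overline{K}_{\mathfrak{q}}^{\times}/q_E^{\mathbb{Z}}$. The local mod $p$ representation of $E\otimes\theta$ is upper-triangular with diagonal characters $\chi_p$ and $1$, corresponding to the filtration $\mu_p \subset E\otimes\theta[p]$. Since $V_p$ is a $G_K$-stable line, the restriction of $V_p$ to the decomposition group at $\mathfrak{q}$ (after the twist) must coincide with one of these two lines, so the character $\lambda\cdot\theta$ restricted to $\sigma_\mathfrak{q}$ reduces to either $\chi_p(\sigma_\mathfrak{q}) \equiv n_\mathfrak{q}$ or $1$ mod $\mathfrak{p}_0$. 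Raising to the twelfth power kills $\theta$ (which has order dividing $2$) as well as the sign ambiguity, giving $\lambda^{12}(\sigma_\mathfrak{q}) \equiv 1$ or $n_\mathfrak{q}^{12} \pmod{\mathfrak{p}_0}$, and then the $r$-th power yields the desired dichotomy.

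Combining with Proposition~\ref{Keyprop}, I conclude that $\mathfrak{p}_0$ divides one of the two factors $\alpha^a(\alpha^\tau)^b - 1$ or $\alpha^a(\alpha^\tau)^b - n_\mathfrak{q}^{12r}$, hence divides their product. Taking $\mathrm{Norm}_{K/\Q}$ of this product shows $p$ divides $M_\mathfrak{q}$.

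For the non-vanishing claim, suppose $(a,b) = (12,0)$, so $\alpha^a(\alpha^\tau)^b = \alpha^{12}$ and $n_\mathfrak{q}^{12r} = (\alpha\alpha^\tau)^{12}$. Since $\mathfrak{q}^r = \alpha\,\mathcal{O}_K$ with $\mathfrak{q} \neq \mathcal{O}_K$, the element $\alpha$ is not a unit of $\mathcal{O}_K$, and neither is $\alpha^\tau$; in particular $\alpha^{12} \neq 1$ and $(\alpha^\tau)^{12} \neq 1$ in $K$, so both factors $\alpha^{12} - 1$ and $\alpha^{12} - (\alpha\alpha^\tau)^{12} = -\alpha^{12}((\alpha^\tau)^{12}-1)$ are nonzero in $K$, hence have nonzero norm. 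Therefore $M_\mathfrak{q} \neq 0$. The case $(a,b) = (0,12)$ is symmetric. The main obstacle is the local Galois-theoretic step identifying $\lambda^{12}(\sigma_\mathfrak{q})$, but this reduces cleanly to the well-known description of the Tate curve once one observes that the $12$th power absorbs any quadratic twist needed to pass to the split multiplicative case.
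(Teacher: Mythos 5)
Your proof is correct and follows essentially the same route as the paper: dispose of $\mathfrak{q}\mid p$ via the factor $q$ in $M_\mathfrak{q}$, combine Proposition \ref{Keyprop} with the local dichotomy $\lambda^{12}(\sigma_\mathfrak{q})\equiv 1$ or $n_\mathfrak{q}^{12}\pmod{\mathfrak{p}_0}$ at a prime of potentially multiplicative reduction, and take norms. The only difference is that the paper cites \citep[Proposition~1.4]{Criteres} for this dichotomy where you re-derive it from the Tate parametrisation (and you spell out the non-vanishing claim, which the paper leaves as ``clear''); both are fine.
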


\begin{proof}  Let $\lambda$ be the isogeny character of $(E,V_p)$, with isogeny signature $(a,b)$. If $\mathfrak{q} \mid p$ then the statement holds, so we will assume that $\mathfrak{q} \nmid p$. Let $r$ be the order of the class of $\mathfrak{q}$ in the class group of $K$, and write $\mathfrak{q}^r = \alpha \cdot \mathcal{O}_K$. We then have that (see \citep[Proposition~1.4]{Criteres} for example)
 \[ \lambda^2(\sigma_\mathfrak{q}) \equiv 1 \text{ or } n_\mathfrak{q}^2 \pmod{p}. \] Then, applying Proposition \ref{Keyprop}, we see that \[ \alpha^a (\alpha^\tau)^b \equiv \lambda^{12r}(\sigma_\mathfrak{q}) \equiv 1 \text{ or } n_\mathfrak{q}^{12r} \pmod{\mathfrak{p}_0}. \] Taking norms, we see that \[ p \mid \mathrm{Norm}_{K / \Q}(\alpha^a (\alpha^\tau)^b  - 1) \text{ or } \mathrm{Norm}_{K / \Q}(\alpha^a (\alpha^\tau)^b  - n_\mathfrak{q}^{12r}), \] and we conclude that $p \mid M_\mathfrak{q}$. Finally, it is clear that $M_\mathfrak{q} \neq 0$ if the isogeny signature of $(E,V_p)$ is non-constant. \end{proof}

Unfortunately, $M_\mathfrak{q} = 0$ for all primes $\mathfrak{q}$ if the isogeny signature of $(E,V_p)$ is constant, and so this result will not help us eliminate primes $p$ in the case of a constant isogeny signature. We will use a different approach for this case.

As in Section 3 we will write $x \in X_0(p)(K)$ for the non-cuspidal point that the pair $(E,V_p)$ gives rise to, and we recall that we extended the notion of isogeny signature to non-cuspidal points $y \in X_0(p)(K)$. We will denote the two cusps of $X_0(p)$ (which are defined over $\Q$) by $\infty$ and $0$. We write $k_\mathfrak{q}$ for the residue field of $\mathfrak{q}$. If $y \in X_0(p)(K)$, we denote by $y_{\scriptscriptstyle k_\mathfrak{q}}$ the reduction of $y$ mod $\mathfrak{q}$. Since $\mathfrak{q}$ is a prime of potentially multiplicative reduction for $E$, we have that \begin{equation}\label{redcusp1} x_{\scriptscriptstyle k_\mathfrak{q}} = \infty_{\scriptscriptstyle k_\mathfrak{q}} \text{ or } 0_{\scriptscriptstyle k_\mathfrak{q}}. \end{equation}

We will now assume that $E$ has potentially multiplicative reduction at all primes $\mathfrak{q} \mid q$. Instead of working with the point $x \in X_0(p)(K)$, we will instead focus on the pair $(x,x^\tau)$, which we view as a rational point on the symmetric square of $X_0(p)$, which we write as \[ X_0(p)^{(2)} = \frac{ X_0(p) \times X_0(p) }{ \mathrm{Sym}_2}. \] We will denote by $(x,x^\tau)_{\scriptscriptstyle \mathbb{F}_q}$ the reduction of this rational point mod $q$. From (\ref{redcusp1}), and our assumption that all primes above $q$ are of potentially multiplicative reduction for $E$, we have that \begin{equation} (x,x^\tau)_{\scriptscriptstyle \mathbb{F}_q} = (\infty,\infty)_{\scriptscriptstyle \mathbb{F}_q}, \; (0,0)_{\scriptscriptstyle \mathbb{F}_q}, \text{ or } (\infty,0)_{\scriptscriptstyle \mathbb{F}_q}. \end{equation} If the prime $q$ does not split in $K$ then there is a unique prime above  $q$ and it follows that $(x,x^\tau)_{\scriptscriptstyle \mathbb{F}_q} = ( \infty,\infty)_{\scriptscriptstyle \mathbb{F}_q} \text{ or } (0,0)_{\scriptscriptstyle \mathbb{F}_q}.$ We start by stating the following result (for which we do not need to assume that $p$ is unramified in $K$).

\begin{proposition}[{\citep[p.~32]{B-D}}]\label{immersion} Let $(q,p)$ be a pair of primes satisfying one of the following pairs of conditions:
\begin{itemize}
\item $q \neq 2, p$ and $p \geq 23$, $p \neq 37$; or
\item $q = 2$ and $ 23 \leq p \leq 2357$, $p \neq 37, 41$. 
\end{itemize} Let $y \in X_0(p)(K)$ and suppose  $(y,y^\tau)_{\mathbb{F}_q} = (\infty,\infty)_{\scriptscriptstyle \mathbb{F}_q}$ or $(0,0)_{\scriptscriptstyle \mathbb{F}_q}.$ Then $y= \infty $ or $y = 0$.
\end{proposition}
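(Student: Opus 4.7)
The plan is to use the formal immersion technique originating in Mazur's proof of his isogeny theorem \citep{ratisog} and extended to symmetric squares of modular curves in the spirit of Kamienny. First, I would introduce, for each cusp $c \in \{\infty, 0\}$, the Abel--Jacobi map on the symmetric square
\[
\iota_c \colon X_0(p)^{(2)} \longrightarrow J_0(p), \qquad (P,Q) \longmapsto [P + Q - 2c],
\]
which is defined over $\Q$. Since the Atkin--Lehner involution $w_p$ interchanges $\infty$ and $0$, the case $(y,y^\tau)_{\mathbb{F}_q} = (0,0)_{\mathbb{F}_q}$ reduces via $w_p$ to the case $(y,y^\tau)_{\mathbb{F}_q} = (\infty,\infty)_{\mathbb{F}_q}$, so I would treat only the latter.

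Next, I would pass to a quotient abelian variety $A$ of $J_0(p)$ whose group of $\Q$-rational points is finite, for instance the winding quotient or, for $p \geq 23$, Mazur's Eisenstein quotient (the hypotheses $p \geq 23$ and $p \neq 37$ arise precisely because the Eisenstein/winding quotient theory of $J_0(p)$ is well-behaved in this range, while $J_0(37)$ has non-trivial rank contributions that would need separate treatment). Writing $\pi \colon J_0(p) \to A$ and $f = \pi \circ \iota_\infty$, the heart of the argument is to verify that $f$ is a \emph{formal immersion} at $(\infty,\infty)_{\mathbb{F}_q}$. By the standard cotangent criterion, this reduces to showing that for a basis of Hecke eigenforms $\{g_i\}$ of $H^0(A_{\mathbb{F}_q}, \Omega^1)$, the $2 \times \dim A$ matrix of Fourier coefficients $(a_n(g_i))_{1 \leq n \leq 2,\, i}$ has rank $2$ modulo $q$.

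Granting the formal immersion, the classical Mazur-style descent argument concludes the proof. Suppose for contradiction that $y \notin \{\infty, 0\}$ with $(y,y^\tau)_{\mathbb{F}_q} = (\infty,\infty)_{\mathbb{F}_q}$. The image $f(y,y^\tau) \in A(\Q)$ lies in a finite group and is therefore torsion. When $q$ is odd and of good reduction for $A$, the specialization map on torsion is injective, so $f(y,y^\tau) = f(\infty,\infty) = 0$; the formal immersion property then forces $(y,y^\tau)$ and $(\infty,\infty)$ to coincide in the formal neighbourhood of the cuspidal divisor at any prime above $q$, contradicting $y \neq \infty$.

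The main obstacle is the formal immersion verification itself, which is a computation that must be carried out uniformly in $p$. The restriction to odd $q$ in the first bullet reflects the injectivity of specialization on torsion at odd primes of good reduction. For $q = 2$ the kernel of specialization on $J_0(p)(\Q)_{\mathrm{tors}}$ may contain $2$-torsion, so the argument has to be refined by explicitly tracking the $2$-torsion contribution and verifying, prime by prime for each $p$ in the range $23 \leq p \leq 2357$ with $p \neq 37, 41$, that it does not obstruct the vanishing of $f(y,y^\tau)$. It is precisely this case-by-case computational verification (rather than an intrinsic obstruction) that limits the range of $p$ in the second hypothesis.
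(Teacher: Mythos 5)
Your proposal follows essentially the same route as the paper: reduce to the $(\infty,\infty)$ case via $w_p$, map $X_0(p)^{(2)}$ to a rank-zero quotient of $J_0(p)$ (the paper uses the winding quotient, as in Kamienny's argument), verify the formal immersion at $(\infty,\infty)$ in characteristic $q$ (which the paper delegates to the computations of Banwait--Derickx, with the $q=2$ case accounting for the restricted range of $p$), and conclude by the Mazur-style specialization argument. The details you supply are consistent with the paper's citations, so this is the same proof.
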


\begin{proof} By applying the Atkin--Lehner involution $w_p$ (which swaps the cusps) to $(y,y^\tau)$ if necessary, we may assume that  $(y,y^\tau)_{\mathbb{F}_q} = (\infty,\infty)_{\scriptscriptstyle \mathbb{F}_q}$, and we aim to prove that $(y,y^\tau) = (\infty,\infty)$. We introduce the following notation. We write $S = \mathrm{Spec}(\Z[1/p])$, we write $J_0(p)$ for the Jacobian of $X_0(p)$, and we write $J_e(p)$ for the \emph{winding quotient} of $J_0(p)$ (defined in as in \citep[Definition~2.1]{DKSS}). We then consider the map 
\[f_p:  X^{(2)}_0(p)_{/S} 
\longrightarrow  J_0(p)_{/S} \longrightarrow J_e(p)_{/ S}, \] which is the composition of the Abel--Jacobi map with basepoint $2\infty$ and the projection map to the winding quotient. This is the same map as the one considered in \citep[p.~223]{kamienny}, except that we project to the winding quotient instead of the Eisenstein quotient. 

In order to prove that $(y,y^\tau) = (\infty,\infty)$, following the argument of \citep[p.~225]{kamienny}, it suffices to verify that the map $f_p$ is a \emph{formal immersion} along $(\infty, \infty)$ in characteristic $q$.  This is precisely what is done in \citep[p.~29--33]{B-D} (in particular we refer the reader to the $d = 2$ row in Table 7 of this paper when $q >2$ and the associated data file for the case when $q=2$). As noted in \citep[p.~32]{B-D}, this computation is really an extension of \citep[Lemma~5.4]{DKSS} to the case of quadratic fields.
\end{proof}

We will describe any pair of primes $(q,p)$ satisfying the conditions of Proposition \ref{immersion} as an \textbf{admissible pair}. The upper bound of $2357$ on $p$ in the case $q = 2$ is simply taken from \citep[p.~32]{B-D}. We expect this to hold for all $p > 2357$ and this bound could most likely be increased if desired. Being able to work with the prime $q = 2$ will provide useful information for the computations we carry out in the next section. 

Proposition \ref{immersion} already tells us that if $(q,p)$ is an admissible pair with $q$ a prime that does not split in $K$, and the unique prime of $K$ above $q$ is of potentially multiplicative reduction for $E$, then $E$ cannot have a $K$-rational $p$-isogeny for $p \geq 23$ with $ p \neq 37$. However, it does not consider the case  $(y,y^\tau)_{\scriptscriptstyle  \mathbb{F}_q} = (\infty, 0)_{\scriptscriptstyle  \mathbb{F}_q}$, which is certainly possible if $q$ splits in $K$. Our next result, from the author's own work, addresses this case (for which we do not need to assume that $p$ is unramified in $K$).

\begin{lemma}[{\citep[Lemma~4.8]{MJ}}]\label{redcusp3} Let $p$ and $q$ be primes. Let $y \in X_0(p)(K)$. Suppose $(y,y^\tau)_{\scriptscriptstyle  \mathbb{F}_q} = (\infty, 0)_{\scriptscriptstyle  \mathbb{F}_q}$. Suppose $q \ne 2,p$ and $p \geq 23$. Then $w_p(y) = y^\tau$.
\end{lemma}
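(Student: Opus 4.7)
The plan is to pass from the reduction hypothesis to a divisor-class identity in $J_0(p)(\Q)$ via a formal-immersion argument, and then to extract the conclusion from the geometry of $X_0(p)$.

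Set $D := y + y^\tau$, which is a $\Q$-rational effective degree-$2$ divisor on $X_0(p)$, and consider the class $c := [D - w_p(D)] \in J_0(p)(\Q)$; this class lies in the $(-1)$-eigenspace for the Atkin--Lehner involution $w_p$. The hypothesis $(y,y^\tau)_{\mathbb{F}_q} = (\infty,0)_{\mathbb{F}_q}$ implies that $D$ specialises to $\infty + 0$ modulo every prime $\mathfrak{q}$ of $K$ above $q$, and since $w_p$ interchanges the cusps, $w_p(D)$ specialises to $0 + \infty$. Hence $c$ specialises to $0$ in $J_0(p)(\mathbb{F}_\mathfrak{q})$.

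To promote this to $c = 0$ in $J_0(p)(\Q)$, I would introduce the $\Q$-morphism
\[ \Phi: X_0(p)^{(2)} \longrightarrow J_0(p), \qquad \{P,Q\} \longmapsto [P+Q-w_p(P)-w_p(Q)], \]
so that $\Phi(\{y,y^\tau\}) = c$ and $\Phi$ vanishes identically along the $\Q$-rational curve $C := \{\{P,w_p(P)\} : P \in X_0(p)\} \subset X_0(p)^{(2)}$, which passes through $\{\infty,0\}$. Adapting the formal-immersion technique of Kamienny--Mazur used in the proof of Proposition~\ref{immersion} and in \citep[Section~4]{B-D}, the idea is to compose $\Phi$ with projection to a suitable rank-zero isogeny quotient $A$ of $J_0(p)$ (for instance, a minus part of the winding quotient) whose Mordell--Weil torsion has order coprime to $q$, and to show that the resulting map is a formal immersion in the direction transverse to $C$ at $\{\infty,0\}$ in characteristic $q$. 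The vanishing of the reduction of $c$ then forces $\{y,y^\tau\}$ to lie on $C$.

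Once $\{y,y^\tau\} \in C$, we have $\{y,y^\tau\} = \{P,w_p(P)\}$ for some $P \in X_0(p)(\overline{K})$, leading to two possibilities: either $y = w_p(y)$ (and hence $y^\tau = w_p(y^\tau)$), or $y = w_p(y^\tau)$, equivalently $w_p(y) = y^\tau$, which is the desired conclusion. The first possibility is eliminated by the reduction hypothesis, since $y = w_p(y)$ would force $\infty = w_p(\infty) = 0$ on $X_0(p)_{\mathbb{F}_q}$, a contradiction. The main obstacle is the transverse formal-immersion verification at $\{\infty,0\}$ in characteristic $q$, which amounts to a concrete $q$-expansion computation at the two cusps together with a torsion-freeness statement for the chosen quotient $A$; this is where the hypotheses $p \geq 23$ and $q \neq 2, p$ enter crucially.
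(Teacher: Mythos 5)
The paper itself gives no proof of this lemma; it is imported wholesale from \citep[Lemma~4.8]{MJ}, so your sketch must stand on its own. Your overall strategy --- form the $\Q$-rational class $c=[D-w_p(D)]$ with $D=y+y^\tau$, observe that it dies under reduction modulo $q$, push into a rank-zero quotient, and use a formal-immersion statement at $\{\infty,0\}$ relative to the curve $C=\{\{P,w_p(P)\}\}$ to force $\{y,y^\tau\}$ onto $C$ --- is the right one, and your endgame is essentially correct: $w_p$ stabilises the set $\{y,y^\tau\}$, so $w_p(y)\in\{y,y^\tau\}$, and the case $w_p(y)=y$ is excluded by the reduction data provided one also notes that $\infty\not\equiv 0\pmod{q}$ (which follows from injectivity of reduction on the cuspidal torsion subgroup for odd $q\neq p$).

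The genuine gap is the middle step, which you flag as ``the main obstacle'' but leave entirely unverified, and which is \emph{not} an adaptation of the computations behind Proposition \ref{immersion}: the verifications in \citep{kamienny}, \citep{DKSS} and \citep{B-D} are all carried out at $(\infty,\dots,\infty)$, and at $\{\infty,0\}$ the full formal-immersion property genuinely fails for any quotient on which $w_p$ acts as $-1$, because the fibre contains the curve $C$ and so the cotangent map has rank at most $1$. What rescues the argument --- and what your sketch is missing --- is that the rank-one (``transverse'') condition you need is not a new degree-two computation at all: for an eigenform $f$ with $w_pf=-f$, the differential $\omega_f$ near $0$ is the pullback under $w_p$ of $-\omega_f$ near $\infty$, so the cotangent image at $\{\infty,0\}$ automatically lies in the antidiagonal of $\mathrm{Cot}_{\infty}\oplus\mathrm{Cot}_{0}$, and it spans that line precisely when $X_0(p)\to J_e(p)$ is a formal immersion at the single cusp $\infty$ in characteristic $q$ --- Mazur's classical condition from \citep{ratisog}, valid for all odd $q\neq p$ and $p\geq 23$. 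With that observation the fibre through $\{\infty,0\}$ coincides formally with $C$, and your conclusion follows from classical inputs. Two smaller corrections: requiring $A(\Q)_{\mathrm{tors}}$ to have order coprime to $q$ is both unnecessary and sometimes unachievable (the Eisenstein quotient has rational torsion of order $\mathrm{num}((p-1)/12)$, which may well be divisible by $q$); what you actually need is injectivity of reduction on torsion, which holds for every odd prime of good reduction since the kernel of reduction is a formal group. Also, the argument never establishes $c=0$ in $J_0(p)(\Q)$, only the vanishing of its image in the chosen quotient; your opening phrase suggests otherwise, although the remainder of the sketch correctly uses only the weaker statement.
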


If the isogeny signature of $x \in X_0(p)(K)$ is constant, then the isogeny signatures of the points $w_p(x)$ and $x^\tau$ differ (by Lemma \ref{sig2}), so $w_p(x) \neq x^\tau$. This observation combined with Proposition \ref{immersion} and Lemma \ref{redcusp3} gives the following result.

\begin{proposition}\label{red} Let $(E,V_p)$ be an elliptic curve over $K$ with a $K$-rational subgroup of order $p$. Let $q$ be a prime for which all primes of $K$ above $q$ are of potentially multiplicative reduction for $E$. Suppose that $(q,p)$ is an admissible pair, and that if $q = 2$ then $q$ is inert or ramified in $K$. Then the isogeny signature of $(E,V_p)$ is non-constant.
\end{proposition}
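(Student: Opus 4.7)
The plan is to argue by contradiction. Let $x \in X_0(p)(K)$ denote the non-cuspidal point associated to the pair $(E, V_p)$. Suppose that the isogeny signature of $(E, V_p)$, equivalently of $x$, is constant, say $(a, a)$ with $a \in \{0, 12\}$. By Lemma \ref{sig2}(ii), the isogeny signature of $w_p(x)$ is $(12-a, 12-a)$, while by Lemma \ref{sig2}(i) the isogeny signature of $x^\tau$ is $(a, a)$. These differ, so in particular $w_p(x) \neq x^\tau$. This inequality will be the key consequence of constant signature to derive a contradiction from.

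Next I would use the reduction information at $q$. Since every prime $\mathfrak{q}$ of $K$ above $q$ is of potentially multiplicative reduction for $E$, the point $x$ reduces to a cusp at each such $\mathfrak{q}$, as recalled in (\ref{redcusp1}). Viewing the unordered pair $(x, x^\tau)$ as a rational point of $X_0(p)^{(2)}$ and reducing modulo $q$, the only possibilities are
\[
(x, x^\tau)_{\scriptscriptstyle \mathbb{F}_q} \in \bigl\{(\infty, \infty)_{\scriptscriptstyle \mathbb{F}_q},\; (0, 0)_{\scriptscriptstyle \mathbb{F}_q},\; (\infty, 0)_{\scriptscriptstyle \mathbb{F}_q}\bigr\}.
\]
I would then split into two cases according to this reduction type.

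In the first case, where $(x, x^\tau)_{\scriptscriptstyle \mathbb{F}_q}$ equals $(\infty, \infty)_{\scriptscriptstyle \mathbb{F}_q}$ or $(0, 0)_{\scriptscriptstyle \mathbb{F}_q}$, I would invoke Proposition \ref{immersion} directly, using that $(q, p)$ is an admissible pair, to conclude $x = \infty$ or $x = 0$. This contradicts the fact that $x$ is non-cuspidal. In the second case, where $(x, x^\tau)_{\scriptscriptstyle \mathbb{F}_q} = (\infty, 0)_{\scriptscriptstyle \mathbb{F}_q}$, note that if $q$ is inert or ramified in $K$ then there is a unique prime above $q$ and (by the discussion preceding the proposition) this reduction type does not occur; thus this case requires $q$ to split in $K$. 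By hypothesis, the case $q = 2$ is forbidden from splitting, so we must have $q \neq 2$. Then Lemma \ref{redcusp3} applies and gives $w_p(x) = x^\tau$, contradicting the inequality established at the outset.

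The only place this argument could stall is the case distinction around $q = 2$: one must use the hypothesis that $2$ is inert or ramified precisely to eliminate the mixed-cusp reduction $(\infty, 0)$, since Lemma \ref{redcusp3} is unavailable at $q = 2$. Everything else is a direct assembly of Lemma \ref{sig2}, Proposition \ref{immersion}, and Lemma \ref{redcusp3}, together with the potentially multiplicative reduction hypothesis controlling where $x$ can reduce modulo primes above $q$.
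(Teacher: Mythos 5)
Your proof is correct and follows exactly the route the paper intends: constant signature forces $w_p(x)\neq x^\tau$ via Lemma \ref{sig2}, the potentially multiplicative reduction at all primes above $q$ forces $(x,x^\tau)$ to reduce to a pair of cusps, and then Proposition \ref{immersion} handles the $(\infty,\infty)$ and $(0,0)$ cases while Lemma \ref{redcusp3} (applicable since the mixed case forces $q$ to split, hence $q\neq 2$ by hypothesis) handles $(\infty,0)$. The paper states this proposition without a written-out proof, merely citing the same combination of results, so your argument fills in precisely the intended details.
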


In order to deal with the case in which $q = 2$ and $2$ splits in $K$, we will use the following result.

\begin{lemma} \label{red2} Suppose that the isogeny signature of $(E,V_p)$ is constant. Suppose that $(2,p)$ is an admissible pair such that $2$ splits in $K$ and both primes of $K$ above $2$ are of potentially multiplicative reduction for $E$. Let $\mathfrak{q}_2$ denote a prime of $K$ above $2$ and let $r$ be the order of the class of $\mathfrak{q}_2$ in the class group of $K$. Then  \[ p \mid 2^{12r}-1. \]
\end{lemma}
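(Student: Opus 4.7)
The plan is to exploit the reduction of the non-cuspidal point $x = (E, V_p) \in X_0(p)(K)$ modulo the primes above $2$, together with the constraint on $\lambda^{12r}(\sigma_{\mathfrak{q}_2})$ coming from the constant isogeny signature. Since both primes above $2$ are of potentially multiplicative reduction for $E$, $x$ reduces to one of the cusps $\infty$ or $0$ at each of $\mathfrak{q}_2$ and $\tau(\mathfrak{q}_2)$. Because $(2, p)$ is admissible and $x$ is non-cuspidal, Proposition \ref{immersion} rules out $(x, x^\tau)_{\scriptscriptstyle \mathbb{F}_2}$ being either $(\infty, \infty)_{\scriptscriptstyle \mathbb{F}_2}$ or $(0, 0)_{\scriptscriptstyle \mathbb{F}_2}$. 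Hence, after possibly interchanging $\mathfrak{q}_2$ and $\tau(\mathfrak{q}_2)$, I may assume that $x$ reduces to $\infty$ modulo $\mathfrak{q}_2$ and to $0$ modulo $\tau(\mathfrak{q}_2)$.

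The next step is to translate the reduction at $\mathfrak{q}_2$ into a congruence on $\lambda$. At a prime of potentially multiplicative reduction, there is a quadratic character $\psi$ of $G_{K_{\mathfrak{q}_2}}$ such that twisting $E$ by $\psi$ gives a curve with multiplicative reduction, and via the Tate uniformization of the twist, $V_p$ corresponds to either the $\mu_p$-subgroup or an étale complement. Fixing the convention that reduction of $x$ to $\infty$ at $\mathfrak{q}_2$ corresponds to the $\mu_p$-case, we then have $\lambda|_{G_{K_{\mathfrak{q}_2}}} = \chi_p \cdot \psi$. Since $\psi$ takes values in $\{\pm 1\} \subset \mathbb{F}_p^\times$, raising to the $12r$-th power kills the twist, giving
\[
\lambda^{12r}(\sigma_{\mathfrak{q}_2}) \equiv \chi_p(\sigma_{\mathfrak{q}_2})^{12r} = n_{\mathfrak{q}_2}^{12r} = 2^{12r} \pmod{\mathfrak{p}_0}.
\]

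Finally, since the isogeny signature of $(E, V_p)$ is constant, I may reduce to the case where it is $(0, 0)$ by replacing $(E, V_p)$ with $(E/V_p, E[p]/V_p)$ if the original signature is $(12, 12)$, using Lemma \ref{sig2}(ii). Proposition \ref{Keyprop} applied at $\mathfrak{q}_2$ then gives $\lambda^{12r}(\sigma_{\mathfrak{q}_2}) \equiv \alpha^{0}(\alpha^\tau)^{0} = 1 \pmod{\mathfrak{p}_0}$. Combining with the computation from the previous paragraph yields $2^{12r} \equiv 1 \pmod{\mathfrak{p}_0}$, and since $2^{12r} - 1 \in \Z$ this is equivalent to $p \mid 2^{12r} - 1$, as required. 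The main technical point is identifying $\lambda|_{G_{K_{\mathfrak{q}_2}}}$ from the cusp to which $x$ reduces; this relies on the standard description of $X_0(p)$ near its cusps in terms of Tate curves, together with careful bookkeeping of the quadratic twist. If the convention for which cusp corresponds to the $\mu_p$-subgroup is reversed, the same argument applies with the roles of $\mathfrak{q}_2$ and $\tau(\mathfrak{q}_2)$ swapped, and the conclusion is unchanged.
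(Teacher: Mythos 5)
Your proposal is correct and follows essentially the same route as the paper: use Proposition \ref{immersion} to force $(x,x^\tau)_{\mathbb{F}_2}=(\infty,0)_{\mathbb{F}_2}$, read off $\lambda^{12r}(\sigma)\equiv 2^{12r}$ at the prime above $2$ where $V_p$ is locally of $\mu_p$-type (the paper cites Banwait's argument for the cusp--character dictionary where you invoke the Tate parametrisation directly, with the same hedge on the cusp convention), and compare with Proposition \ref{Keyprop} for the signature $(0,0)$. The only cosmetic difference is that the paper normalises the signature to $(0,0)$ \emph{before} the local computation, which avoids having to track how $\lambda$ changes under the replacement $(E,V_p)\mapsto(E/V_p,E[p]/V_p)$; your symmetry remark between $\mathfrak{q}_2$ and $\tau(\mathfrak{q}_2)$ makes the argument go through in either order.
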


\begin{proof} This may be viewed as a special case of part of \citep[Proposition~4.1]{Banwait}. We may assume, by replacing $(E,V_p)$ with $(E/V_p,E[p]/V_p)$ if necessary, that the isogeny signature of $(E,V_p)$ is $(0,0)$. By Proposition \ref{immersion}, the point $(x,x^\tau) \in X_0(p)^{(2)}(\Q)$ that $(E,V_p)$ gives rise to must satisfy $(x,x^\tau)_{\scriptscriptstyle  \mathbb{F}_2} = (\infty, 0)_{\scriptscriptstyle  \mathbb{F}_2}$. It follows that either $x_{\scriptscriptstyle k_{\mathfrak{q}_2}} = 0_{\scriptscriptstyle k_{\mathfrak{q}_2}}$ or $x^\tau_{\scriptscriptstyle k_{\mathfrak{q}_2}} = 0_{\scriptscriptstyle k_{\mathfrak{q}_2}}$. By replacing $(E,V_p)$ by $(E^\tau,V_p^\tau)$ if necessary, we may assume that $x_{\scriptscriptstyle k_{\mathfrak{q}_2}} = 0_{\scriptscriptstyle k_{\mathfrak{q}_2}}$. We note that the isogeny signature remains $(0,0)$.

From the proof of Proposition \ref{MultProp} we know that \[ \lambda^2(\sigma_{\mathfrak{q_2}}) \equiv 1 \text{ or } n_{\mathfrak{q}_2}^2 \pmod{p}. \]
Since $x_{\scriptscriptstyle k_{\mathfrak{q}_2}} = 0_{\scriptscriptstyle k_{\mathfrak{q}_2}}$, applying the argument of \citep[p.~19]{Banwait} we must be in the second case: $\lambda^2(\sigma_{\mathfrak{q_2}}) \equiv  n_{\mathfrak{q}_2}^2 \pmod{p}$. Since the isogeny signature of $(E,V_p)$ is $(0,0)$, using Proposition \ref{Keyprop} we obtain \[ 1 \equiv \lambda^{12r}(\sigma_{\mathfrak{q_2}}) \equiv n_{\mathfrak{q}_2}^{12r} \equiv 2^{12r} \pmod{p}, \] where we have used the fact that $n_{\mathfrak{q}_2} = 2$ in the final step.
\end{proof}

\begin{remark} In \citep[p.~338]{Momose}, it is claimed that if the isogeny signature of $(E,V_p)$ is constant, if $\mathfrak{q} \mid q$ is of potentially multiplicative reduction for $E$, and if $(q,p)$ is an admissible pair, then $p-1 \mid 12h$, where $h$ is the class number of $K$. The argument leading to this seems to be incorrect. A correct condition is $p \mid n_\mathfrak{q}^{12r}-1$, where $r$ is the order of the class of $\mathfrak{q}$ in the class group of $K$, which is part of \citep[Proposition~4.1]{Banwait} (referred to in the proof of Lemma \ref{red2} above).
\end{remark}

\begin{remark} It is perhaps worth noting at this point that the results of this section could be suitably extended to number fields of larger degree (we refer to \citep{B-D} for a selection of such results). There are two main reasons we have chosen to focus on the case of quadratic fields. Firstly, we can produce strong results in the case of quadratic fields. This is due to the fact that the methods we use turn out to be very effective in the case of quadratic fields. It is also due to the extensive work done on studying quadratic points on the modular curves $X_0(p)$ of small genus. Secondly, as discussed in the introduction, applications of the modular approach (for solving Diophantine equations) over totally real fields are most common over (real) quadratic fields, and so we hope that our results will be directly useful in this setting.
\end{remark}

\section{Computations}


In this section, we apply the results of Section 4 to certain specific quadratic fields and families of quadratic fields. We start by outlining the basic strategy. 

As in Sections 3 and 4, we let $p$ be a prime and let $(E,V_p)$ be an elliptic curve over a quadratic field $K$ with a $K$-rational subgroup of order $p$, and we assume that $E$ is semistable at the primes of $K$ above $p$. For the moment, we do not make any further assumptions on $p$ (in particular, $p$ could ramify in $K$, or $p$ could be $<17$). Our strategy consists of three main steps. In each step we try and eliminate the prime $p$ as a possible $K$-rational $p$-isogeny prime for $E$.

\textbf{Step 1.} Assume that $p \geq 23$, $p \neq 37$, that $p$ is unramified in $K$, and that the isogeny signature of $(E,V_p)$ is constant. 

Requiring $p \geq 23$ and $p \neq 37$ means that $(q,p)$ will be an admissible pair for any $q \neq 2,p,$ and for $q=2$ when $p \leq 2357$ and $p \neq 41$. We may assume, by replacing $(E,V_p)$ by $(E/V_p,E[p]/V_p)$ if necessary, that the isogeny signature of $(E,V_p)$ is $(0,0)$.

We now choose auxiliary primes $q_1, \dots, q_t$, with $q_i \geq 3$ for all $i$. By Proposition \ref{red}, unless $q_i=p$, it is not possible for both primes of $K$ above $q_i$ to be of potentially multiplicative reduction for $E$, and so there is a prime $\mathfrak{q}_i \mid q_i$ of potentially good reduction for $E$. We compute the integers $R_{q_i}$ (which we recall are independent of the prime chosen above $q_i$), and applying Corollary \ref{Rqcorol} we have that 
 \[ p \mid \gcd(R_{\mathfrak{q}_1}, \dots, R_{\mathfrak{q}_t}). \] If $q_i = p$ for some $i$, then $p \mid R_{q_i}$ and so this still holds. This leaves us with a finite set of primes $p$ which we are unable to eliminate (which we hope is fairly small). 

For each remaining prime, we may then perform a finer analysis to try and eliminate it. We use Lemma \ref{pcrit} to try and achieve a contradiction with each prime $\mathfrak{q}_i$. Further, if $(2,p)$ is an admissible pair then we may also use $q=2$ to try and eliminate $p$. Indeed, if $2$ is inert or ramified in $K$, then by Proposition \ref{red}, the unique prime of $K$ above $2$ must be of potentially good reduction for $E$, and we may apply Lemma \ref{pcrit}. Otherwise, we may apply Lemma \ref{pcrit} in combination with Lemma \ref{red2}.

\textbf{Step 2.} Assume that $p \geq 17$, that $p$ is unramified in $K$, and that the isogeny signature of $(E,V_p)$ is non-constant. 

In this case, $p$ must split in $K$. Also, by Cororally \ref{eps}, if $K$ is a real quadratic field with fundamental unit $\epsilon$, then $ p \mid \mathrm{Norm}_{K / \Q}(\epsilon^{12}-1)$. 

We now use auxiliary primes $q_1, \dots, q_t \geq 2$. For each auxiliary prime $q$, either there is a prime of $K$ above $q$ which is of potentially good reduction for $E$, in which case $p \mid R_q$ by Corollary \ref{Rqcorol}, or there is a prime of $K$ above $q$ which is of potentially multiplicative reduction for $E$, in which case $p \mid M_q$ by Proposition \ref{MultProp}. In both cases, we have $p \mid R_qM_q$, and $R_qM_q$ is independent of $p$ for each $q$.

\textbf{Step 3.} We consider the primes $p = 11, 17,$ and $19$, the primes that ramify in $K$, and any primes that we were unable to eliminate in Step 1 and Step 2.

For each of these primes we study $X_0(p)(K)$ directly to try and eliminate it. 

We note that it is not possible to eliminate the primes $2,3,5,7,13,$ and $37$. To see this, it is enough to consider the base change to $K$ of the elliptic curves appearing in Table \ref{TabRat}.

The following lemma will be useful in Step 1 and Step 3.

\begin{lemma}\label{exceptional} Let $K= \Q(\sqrt{d})$ be a quadratic field and suppose \[ d \notin \{ -1, -3, -5,-7,-11,-15,-31,-71,-131 \}. \] Let $x \in X_0(p)(K)$ be a non-cuspidal point and suppose $w_p(x) \neq x^\tau$ (which will be the case if the isogeny signature of $x$ is constant). Then \begin{equation}\label{lst} p \notin \{ 23,29,31,41,43,47,53,59,61,67,73, 79, 83, 89, 101, 131\}. \end{equation}
\end{lemma}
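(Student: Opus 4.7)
The plan is to deduce the statement from the existing classifications of quadratic points on $X_0(p)$ for each of the sixteen primes $p$ in the list. A non-cuspidal point $x \in X_0(p)(K)$ satisfying $w_p(x) \neq x^\tau$ cannot be the pullback of a $\Q$-rational point of the Atkin--Lehner quotient $X_0^+(p) = X_0(p)/w_p$, and so $x$ must be what is usually termed a \emph{sporadic} (or exceptional) quadratic point on $X_0(p)$. For every prime $p$ in the list, such sporadic quadratic points have been determined in the literature (through successive work of Bruin--Najman, Ozman--Siksek, Box, Najman--Vukorepa, and others); in each case they turn out to be CM points.

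Given the classifications, the proof reduces to enumerating the imaginary quadratic fields that can arise as fields of definition of a sporadic CM point on some $X_0(p)$ in the list. By the theory of complex multiplication, a non-cuspidal CM point is defined over a quadratic field $K$ only if $K$ contains the CM field of the associated elliptic curve, and the existence of a $K$-rational cyclic $p$-subgroup imposes further restrictions on the CM order and its class group (in the style of the analysis in \citep[pp.~154--155]{ratisog}). A direct prime-by-prime check shows that the only quadratic fields $K = \Q(\sqrt{d})$ which arise across the sixteen primes are those with $d \in \{-1,-3,-5,-7,-11,-15,-31,-71,-131\}$. Hence, under the hypothesis of the lemma, no non-cuspidal $x \in X_0(p)(K)$ with $w_p(x) \neq x^\tau$ can exist for the primes in the list.

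The main obstacle is the case-by-case nature of the argument, which requires invoking (and, in a few cases, redoing) an explicit determination of $X_0(p)(K)$ for each of the sixteen primes, with $K$ varying over quadratic fields. For the larger primes (e.g.\ $p \in \{101, 131\}$), the classification rests on a Mordell--Weil sieve computation applied to the symmetric square $X_0(p)^{(2)}$, using the fact that a suitable Atkin--Lehner quotient of $J_0(p)$ has Mordell--Weil rank zero over $\Q$, so that Chabauty-type methods apply (in the spirit of Proposition~\ref{immersion} but specialised to these primes). Once these classification inputs are collected, the routine enumeration of CM points together with their fields of definition produces exactly the exceptional set, and the lemma follows.
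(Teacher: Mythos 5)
Your proposal follows essentially the same route as the paper: the lemma is proved by appealing to the published classifications of exceptional quadratic points on $X_0(p)$ (Bruin--Najman for the hyperelliptic cases, Box for $p\leq 73$ with infinite Mordell--Weil group, Najman--Vukorepa for $p\geq 79$) and reading off the fields of definition from their tables. The additional CM-theoretic discussion and the sieve remarks describe what those cited works do internally rather than adding a new step, so the argument matches the paper's.
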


\begin{proof} Suppose $p$ is one of the primes in (\ref{lst}). For $p \leq 73$ the papers \citep{hyperquad} and \citep{Boxquad} compute all quadratic points $x \in X_0(p)(K)$ that satisfy $w_p(x) \neq x^\tau$; such points are called \emph{exceptional}. The recent paper \citep{NajVuk} does the same for $p \geq 79$, and  we will use this result for $p = 101$ in our example in Section 5.3. In each paper, we simply consult the tables and read off the possible fields over which these exceptional quadratic points are defined. 
\end{proof}

\subsection{Families of quadratic fields}


We start by proving Theorem \ref{Thm1}.

\begin{proof}[Proof of Theorem \ref{Thm1}] We assume that there there exists an elliptic curve $E / K$ which admits a $K$-rational $p$-isogeny and is semistable at the primes of $K$ above $p$. We apply the strategy described at the start of this section. Even though the quadratic field $K$ is not fixed, knowing the exponent, $n$, of the class group of $K$ will be enough to do this. We will assume that $p \geq 23$, $p \neq 37$, that $p$ is unramified in $K$, and that if $p$ splits in $K$ then $p \nmid \mathrm{Norm}_{K / \Q}(\epsilon^{12}-1)$. These assumptions on the prime $p$ mean that the isogeny signature of $(E,V_p)$ is constant, and we need only focus on Step 1 of the strategy outlined above. We are aiming to achieve a contradiction.

We will choose several auxiliary primes $q$. For each auxiliary prime $q$ we choose, and a prime $\mathfrak{q} \mid q$, we \emph{do not} know two things: \begin{itemize}
\item whether $q$ splits, is inert, or ramifies in $K$; and
\item the order, $r$, of the class of $\mathfrak{q}$ in the class group of $K$.
\end{itemize}
If $q$ is inert in $K$, then $r = 1$ and $n_\mathfrak{q} = q^2$. Otherwise, $q$ is split or ramified in $K$, $n_\mathfrak{q} = q$,  and $r \mid n$. This means that for each $\mathfrak{q}$, we have $1+d(n)$ possibilities for the pair $(n_\mathfrak{q}, r)$, where $d(n)$ denotes the number of positive divisors of $n$. Computing the integer $R_\mathfrak{q}$ defined in (\ref{Rq}) only requires this pair as input, and so we simply run through all $1+d(n)$ possibilities, obtain an integer $R_\mathfrak{q}$ for each of these, and take their lowest common multiple at the end. We may then apply Lemma \ref{pcrit} to try and eliminate even more primes, again running through all $1+d(n)$ possibilities. We use the auxiliary prime $q = 2$ if $(2,p)$ is an admissible pair.  
Using the auxiliary primes $3 \leq q \leq 19$, followed by $q=2$, we were able to eliminate all possibilities for the prime $p$, other than $p = 73$ in the case $n = 3$, which we rule out by applying Lemma \ref{exceptional}.
\end{proof}

We note that it took under a tenth of a second to perform the necessary computations to obtain this result. 

We have considered $n \leq 3$ here, but we can also prove similar results for larger values of $n$, with the caveat that the size of the set of constant isogeny signature primes which we are unable to eliminate sometimes increases. For example, the constant isogeny signature primes we are unable to eliminate in the case $n = 100$ are \[ p \in \{97, 151, 241, 401, 601, 1201, 1801  \}.\] We  expect that we should be able to eliminate these extra primes, but the method we use does not achieve this. 

\begin{remark}\label{conv_rem}
In contrast to the case $n=100$ above, as noted in the introduction, it is not possible to eliminate the primes $p \leq 19$ or $p = 37$ appearing in Theorem \ref{Thm1}, and we may construct the appropriate elliptic curves to verify this. Indeed, for a given field $K$, if $p \in \{2,3,5,7,13,37\}$ then we may use the base change to $K$ of the corresponding elliptic curve appearing in Table \ref{TabRat}. For $p \in \{11,17,19\}$ we may search for a quadratic field $K$ for which the elliptic curve $X_0(p)$ has positive rank over $K$ and follow the strategy of the proof of Theorem \ref{Thm2} below. For $n = 1$, $2$, and $3$, it turns out we can use the fields $K = \Q(\sqrt{29}), \Q(\sqrt{10}),$ and $\Q(\sqrt{79})$ respectively. Alternatively, for $n=1$ we could use Theorem \ref{Thm2} and use the fields $K = \Q(\sqrt{2})$ and $\Q(\sqrt{3})$. These computations are presented in the accompanying \texttt{Magma} files. 
\end{remark}

The following result demonstrates that knowing the splitting behaviour of certain primes can produce strong results.

\begin{theorem}\label{Thm4}
Let $K$ be a real quadratic field in which the primes $2$ and $3$ are inert, and let $\epsilon$ be a fundamental unit of $K$. Let $p$ be a prime such that there exists an elliptic curve $E / K$ which admits a $K$-rational $p$-isogeny and is semistable at all primes of $K$ above $p$. Then either 
\begin{itemize}
\item $p$ ramifies in $K$; or
\item $p \in \{2,3,5,7,11,13,17,19,37 \}$; or
\item $p$ splits in $K$ and $p \mid \mathrm{Norm}_{K / \Q}(\epsilon^{12}-1)$.
\end{itemize}
\end{theorem}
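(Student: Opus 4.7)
The plan is to follow the three-step computational strategy outlined at the start of Section~5, exploiting the fact that the assumed inertness of $2$ and $3$ in $K$ removes the need to control the exponent of the class group of $K$. The primes to be ruled out are those $p \geq 23$ with $p \neq 37$ which are unramified in $K$ and which are either inert in $K$ or split in $K$ with $p \nmid \mathrm{Norm}_{K/\Q}(\epsilon^{12}-1)$. In both sub-cases the isogeny signature of $(E,V_p)$ is constant: in the inert case this is automatic since $\tau(\mathfrak{p}_0) = \mathfrak{p}_0$, and in the split case it follows from Corollary~\ref{eps}. Replacing $(E,V_p)$ with $(E/V_p, E[p]/V_p)$ if necessary, we may then assume the signature equals $(0,0)$.

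The heart of the argument is Step~1 of the strategy applied with the auxiliary primes $q = 3$ and $q = 2$. Inertness forces the unique prime $\mathfrak{q}_q$ of $K$ above $q$ to be principal with generator $\alpha = q$ and to satisfy $r = 1$, so the integer $R_q$ defined in~(\ref{Rq}) depends only on $q$ and not on any further invariant of $K$. Since $(3,p)$ is admissible for every $p \geq 23$ with $p \neq 37$, Proposition~\ref{red} shows that $\mathfrak{q}_3$ is of potentially good reduction for $E$, so Corollary~\ref{Rqcorol} yields $p \mid R_3$, and this divisibility alone cuts the candidate set down to a small finite list. For those candidates with $p \leq 2357$ and $p \neq 41$, the pair $(2,p)$ is also admissible and the same argument gives $p \mid \gcd(R_2, R_3)$, shrinking the list further.

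For each remaining candidate I would apply the finer criterion of Lemma~\ref{pcrit} at $\mathfrak{q}_2$ and $\mathfrak{q}_3$, running through $a \in A_q^{(p)}$ to obtain a contradiction. The hypothesis that $2$ and $3$ are inert in the real quadratic field $K = \Q(\sqrt{d})$ forces $d \equiv 5 \pmod{24}$, so in particular $d \notin \{-1,-3,-5,-7,-11,-15,-31,-71,-131\}$, and Lemma~\ref{exceptional} is then available for any sporadic prime in its range which resists the elementary attack (for instance $p = 73$, which already required this lemma in the proof of Theorem~\ref{Thm1}).

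The step I expect to be the main obstacle is controlling exactly which primes survive $p \mid \gcd(R_2, R_3)$ after also running Lemma~\ref{pcrit}. Unlike in Theorem~\ref{Thm1}, no auxiliary prime $q \geq 5$ is available to us: we know neither its splitting behaviour in $K$ nor a bound on the order of the class of a prime above it, since the class group of $K$ is entirely unconstrained. The entire argument must therefore be squeezed out of $q = 2$ and $q = 3$ alone, and its success rests on the (computationally verifiable but not a priori obvious) fact that every prime factor of $\gcd(R_2, R_3)$ exceeding $19$ and different from $37$ either falls to Lemma~\ref{pcrit} or lies within the reach of Lemma~\ref{exceptional}.
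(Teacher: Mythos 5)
Your proposal is correct and follows essentially the same route as the paper: reduce to the constant-signature case, use the inert auxiliary primes $q=3$ and then $q=2$ with Proposition \ref{red} and Corollary \ref{Rqcorol} to conclude $p \mid \gcd(R_3,R_2)$ (checking, as you anticipate, that $41 \nmid R_3$ and that the largest prime factor of $R_3$ is below $2357$ so that $(2,p)$ is admissible for all survivors). The only difference is that the paper's computation gives $\gcd(R_3,R_2) = 2^6\cdot 3^4\cdot 5^2\cdot 7^2\cdot 13^2\cdot 19\cdot 37$, which has no prime factor $\geq 23$ other than $37$, so the finer sieve of Lemma \ref{pcrit} and the appeal to Lemma \ref{exceptional} that you hold in reserve turn out to be unnecessary.
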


\begin{proof} We proceed as in the proof of Theorem \ref{Thm1} and need only consider Step 1 of the strategy outlined at the start of this section. We assume that $p \geq 23$, $p \neq 37$, $p$ is unramified in $K$, and that the isogeny signature of $(E,V_p)$ is constant. We aim to obtain a contradiction. We start by using the auxiliary prime $q=3$. Since $3$ is inert in $K$, we know that the unique prime above $3$ is a principal ideal and has norm $3^2$. By Proposition \ref{red}, the unique prime above $3$ must be of potentially good reduction for $E$ and so $p \mid R_3$. The largest prime factor of $R_3$ is $1489$ and $41 \nmid R_3$, so we may also now use the auxiliary prime $q=2$, since $(2,p)$ will be admissible for all remaining primes $p$. We again apply Proposition \ref{red} to conclude that the unique prime above $2$ must be of potentially good reduction for $E$, and so $p \mid R_2$. So \[ p \mid \gcd(R_3,R_2) = 754471972800 = 2^6 \cdot 3^4 \cdot 5^2 \cdot 7^2 \cdot 13^2 \cdot 19 \cdot 37, \] giving the required contradiction.
\end{proof}

Similarly to Theorem \ref{Thm1} above, we cannot eliminate the primes $p \leq 19$ or $p = 37$ from the statement of this theorem. This can be seen by considering the field $K= \Q(\sqrt{29})$ again (as in Remark \ref{conv_rem}) in which the primes $2$ and $3$ are inert.


\subsection{Small real quadratic fields}


We will consider the real quadratic fields $K = \Q(\sqrt{d})$ for $d \in \{2,3,5,6,7\}$. Each of these fields has class number $1$. 

\begin{lemma}\label{lem2} Let $K = \Q(\sqrt{d})$, where $ d \in \{2,3,5,6,7\}$. Let $p$ be a prime such that there exists an elliptic curve $E / K$ which admits a $K$-rational $p$-isogeny and is semistable at all primes of $K$ above $p$. Then either  $p \in \{2,3,5,7,13,37\}$ or the pair $(d,p)$ appears in Table \ref{Tab2} of the introduction.
\end{lemma}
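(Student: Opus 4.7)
The plan is to apply the three-step strategy outlined at the start of Section~5 to each of the five fields $K = \Q(\sqrt{d})$ with $d \in \{2,3,5,6,7\}$. All five of these have class number $1$, so in particular Theorem~\ref{Thm3} with $n=1$ tells us that any unramified constant-signature prime $p$ must satisfy $p \leq 19$, $p = 37$, or $p \equiv 1 \pmod{12}$. Conveniently, for each $d$ under consideration, the primes ramifying in $K$ all lie in $\{2,3,5,7\}$ already, so the ramification case contributes nothing beyond the base list and may be ignored.

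For Step~1 (constant isogeny signature) I would fix $p \geq 23$, $p \neq 37$, $p$ unramified, and $(E,V_p)$ of signature $(0,0)$ after possibly interchanging $(E,V_p)$ with $(E/V_p, E[p]/V_p)$. Since the class number is $1$, we have $r = 1$ throughout and $n_\mathfrak{q}$ depends only on the splitting behaviour of $q$ in $K$, which is explicit for each $d$. Running through the auxiliary primes $q = 3, 5, 7, 11, 13, 17, 19$, I would compute the integers $R_\mathfrak{q}$ from~(\ref{Rq}) and, by Corollary~\ref{Rqcorol}, conclude that $p$ divides their gcd. Any surviving primes would then be eliminated individually by a finer application of Lemma~\ref{pcrit}, and, whenever $(2,p)$ is admissible, by invoking $q = 2$ together with Proposition~\ref{red} or Lemma~\ref{red2} according as $2$ is inert/ramified or split in $K$ (which of course depends on $d$).

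Step~2 (non-constant isogeny signature) is much more restricted: such a $p$ must split in $K$ and, by Corollary~\ref{eps}, must divide $\mathrm{Norm}_{K/\Q}(\epsilon^{12}-1)$ for a fundamental unit $\epsilon$. For each $d$ I would write $\epsilon$ down explicitly, compute this norm, and read off the short list of candidates. Any candidate not already in $\{2,3,5,7,13,37\}$ and not permitted by Table~\ref{Tab2} should then be dispatched by an auxiliary prime $q$ and the divisibility $p \mid R_q \cdot M_q$ arising from Corollary~\ref{Rqcorol} and Proposition~\ref{MultProp}.

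This leaves Step~3: the primes $p \in \{11, 17, 19\}$, together with any exceptional survivors from Steps~1 and~2. This is where I expect the main obstacle to lie. Each of $X_0(11)$, $X_0(17)$, $X_0(19)$ is an elliptic curve of possibly positive rank over the relevant $K$, so for each field one must compute its Mordell--Weil group over $K$ and enumerate the non-cuspidal $K$-rational points. For each such point one then has to check whether the associated pair $(E,V_p)$ is actually semistable at the primes of $K$ above $p$, equivalently whether the point reduces to a cusp there. Verifying that precisely the pairs $(d,p)$ listed in Table~\ref{Tab2} survive this final semistability filter, and no others, is the main bookkeeping task; Lemma~\ref{exceptional} can additionally be brought to bear on any stray constant-signature primes not eliminated in Step~1.
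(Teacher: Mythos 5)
Your overall architecture is the same as the paper's: Step 1 for constant signature (the paper simply invokes Theorem \ref{Thm1}, which you effectively re-derive for $n=1$), Step 2 via $\mathrm{Norm}_{K/\Q}(\epsilon^{12}-1)$ and the divisibility $p \mid R_q M_q$ (this is exactly how the paper kills the one surviving candidate, $p=97$ for $d=6$), and Step 3 by direct study of $X_0(p)(K)$ for $p \in \{11,17,19\}$. The ramification observation and the use of Lemma \ref{exceptional} also match. So the route is right; the problem is in how you finish Step 3.

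The genuine gap is your semistability check at the primes above $p$. You propose to test ``whether the associated pair $(E,V_p)$ is actually semistable at the primes of $K$ above $p$, equivalently whether the point reduces to a cusp there.'' That equivalence is false: reduction to a cusp detects potentially \emph{multiplicative} reduction at primes $\mathfrak{q} \nmid p$, whereas semistability at $\mathfrak{p} \mid p$ allows good reduction (which certainly does not reduce to a cusp), and $X_0(p)$ has bad reduction at $p$ in any case. More importantly, a non-cuspidal point of $X_0(p)(K)$ determines $E$ only up to quadratic twist (when $j \neq 0, 1728$), so exhibiting one representative that fails to be semistable proves nothing: a priori some twist of it could be semistable. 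The paper closes this by the discriminant-valuation argument of Corollary \ref{ratcorol}: one writes down a representative $F$ with potentially good reduction and $0 < v_{\mathfrak{p}}(\Delta_{\min}(F)) < 6$, notes that twisting shifts $v_{\mathfrak{p}}(\Delta)$ by multiples of $6$, and concludes that every representative has $0 < v_{\mathfrak{p}}(\Delta) < 12$, hence additive reduction. Without this (or some substitute covering all twists), your elimination of, say, $p=19$ over $\Q(\sqrt{6})$ from its unique non-cuspidal point in $X_0(19)(K) = X_0(19)(\Q) \cong \Z/3\Z$ is incomplete. A smaller quibble: for pairs where $X_0(p)(K)$ has positive rank you cannot ``enumerate the non-cuspidal $K$-rational points''; fortunately those are exactly the pairs in Table \ref{Tab2}, for which the lemma (the ``only if'' direction) requires nothing, the existence half being handled separately in the proof of Theorem \ref{Thm2}.
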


In order to prove Theorem \ref{Thm2} (for $d > 0$), we will also need to prove the converse of this statement. We do this afterwards.

\begin{proof}

We start by applying Theorem \ref{Thm1}. It remains to consider the primes $p \in \{11, 17, 19 \}$ and the primes that split in $K$ that divide $\mathrm{Norm}_{K / \Q}(\epsilon^{12} - 1)$, for $\epsilon$ a fundamental unit of $K$. 

The only field $K$ in our list for which there exists a prime $p \geq 23$, $p \neq 37$ that splits in $K$ and divides $\mathrm{Norm}(\epsilon^{12} - 1)$ is $K = \Q(\sqrt{6})$. In this case, the element $\epsilon = 5 + 2 \sqrt{6}$ is a fundamental unit for $K$. The prime factors of $\mathrm{Norm}(\epsilon^{12}-1)$ are $2,3,5, 11,$ and $97$, and we must therefore consider $p=97$, which splits in $K$. We now follow Step 2 of the strategy outlined at the start of the section. By replacing $(E,V_p)$ with $(E/V_p,E[p]/V_p)$ (or by $(E^\tau,V_p^\tau)$) if necessary, we may assume that the isogeny signature of $(E,V_p)$ is $(12,0)$. We  will apply  Corollary \ref{Rqcorol} and Proposition \ref{MultProp} using the auxiliary prime $q=5$ to obtain a contradiction. We compute the integers $R_5$ and $M_5$, which we recall are independent of the prime chosen above $5$. We compute that \begin{align*} R_5  & = 2^{10} \cdot 3^{10} \cdot 5^{15} \cdot 13^2 \cdot 17^2 \cdot 19^4 \cdot 23^2 \cdot 41^2 \cdot 73^2 \cdot 241^2, \\ M_5 & = 2^{10} \cdot 3^8 \cdot 5^{15} \cdot 43^2 \cdot 433^2. \end{align*} Since $97$ is not a prime factor of $R_5$ or $M_5$ we may eliminate the prime $p=97$ for the case of a non-constant isogeny signature.

We now continue with Step 3 and consider the primes $11, 17,$ and $19$. We will consider the case $K = \Q(\sqrt{6})$, the other cases being similar. We must eliminate the prime $p = 19$. We start by computing that $X_0(19)(K) = X_0(19)(\Q) = \Z / 3\Z$. Let $x \in X_0(19)(K)$ denote the point corresponding to $(E,V_p)$, which is the unique non-cupsidal point in $X_0(19)(K)$. We now use exactly the same argument as in the proof Corollary \ref{ratcorol}. Let $\mathfrak{p}_1$ and $\mathfrak{p}_2$ denote the two primes of $K$ above $19$. The curve $E$ is the quadratic twist of a curve $F$, defined over $K$, which has potentially good reduction at $\mathfrak{p}_1$ and $\mathfrak{p}_2$ and satisfies $0 < v_{\mathfrak{p}_i}(\Delta_\mathrm{min}(F)) < 6$ for $i \in \{1,2 \}$. It follows that $v_{\mathfrak{p}_i}(\Delta_\mathrm{min}(E))>0,$ so $E$ must have potentially good, non-semistable, reduction at $\mathfrak{p}_1$ and $\mathfrak{p}_2$, which allows us to eliminate $p = 19$.
\end{proof}

\begin{proof}[Proof of Theorem \ref{Thm2} for $d>0$]
By Lemma \ref{lem2} it will be enough to find an appropriate elliptic curve for each value of $p$. For $p \in \{2,3,5,7,13,37 \}$ we may simply use the base change to $K$ of the elliptic curve appearing in Table \ref{TabRat}. It remains to deal with the primes $p \in \{11, 17,19 \}$ in Table \ref{Tab2}. In each case, $X_0(p)$ is an elliptic curve, and using \texttt{Magma} we can directly compute that $X_0(p)(K)$ has rank $1$, along with a generator, $Q$, for the free part of its Mordell--Weil group. We may then write down, using \texttt{Magma}'s `small modular curve package', an elliptic curve $E / K$ with a $K$-rational $p$-isogeny representing the point $mQ$ for (small) integers $m$, and test its reduction type at each prime above $p$. In each case, we found a suitable elliptic curve using $m = 1$ or $2$.
\end{proof}

\subsection{An example with large class group}


We consider the quadratic field $K=\Q(\sqrt{d})$, with $d = 47 \cdot 67 \cdot 101$. The class group of $K$ is $\Z / 122 \Z$.

\begin{proposition} Let $K = \Q(\sqrt{d})$ with $d = 47 \cdot 67 \cdot 101$. There exists an elliptic curve $E / K$ which admits a $K$-rational $p$-isogeny and is semistable at all primes of $K$ above $p$ if and only if $p \in \{2,3,5,7,11,13,19,37\}$ 
\end{proposition}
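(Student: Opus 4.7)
The plan is to apply the three-step strategy outlined at the start of this section with exponent $n = 122$, and then to verify the converse by exhibiting explicit elliptic curves. For Step 1 (constant isogeny signature, $p \geq 23$, $p \neq 37$, $p$ unramified in $K$) I would follow the proof of Theorem \ref{Thm1}: for each auxiliary prime $q$ there are $1 + d(122) = 5$ possibilities for the pair $(n_\mathfrak{q}, r)$, so I would compute $R_\mathfrak{q}$ as in (\ref{Rq}) for each possibility, take the lcm, and intersect the resulting integers $R_q$ across auxiliary primes $q \in \{3, 5, 7, \ldots, 19\}$ together with $q = 2$. Any residual primes would be eliminated by Lemma \ref{pcrit}, and since $d = 47 \cdot 67 \cdot 101$ is not among the excluded values of Lemma \ref{exceptional}, any surviving prime in that lemma's list can be dispatched immediately. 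For Step 2 (non-constant isogeny signature), Corollary \ref{eps} forces $p$ to split in $K$ and to divide $\mathrm{Norm}_{K/\Q}(\epsilon^{12} - 1)$ for a fundamental unit $\epsilon$; I would factor this norm and eliminate each candidate prime using $p \mid R_q \cdot M_q$ (from Corollary \ref{Rqcorol} and Proposition \ref{MultProp}) for a suitable small auxiliary prime $q$.

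For Step 3, it remains to handle $p \in \{11, 17, 19\}$ (the small primes) and $p \in \{47, 67, 101\}$ (the ramified primes). The primes $11$ and $19$ appear in the final list and are produced in the converse. For $p = 17$, since $X_0(17)$ is an elliptic curve, I would compute $X_0(17)(K)$ and, for each non-cuspidal point, run the quadratic twist argument from the proof of Corollary \ref{ratcorol} to conclude that no representative is semistable at the primes above $17$. For the ramified primes $p \in \{47, 67, 101\}$, all of which appear in the list of Lemma \ref{exceptional}, that lemma forces any non-cuspidal $x \in X_0(p)(K)$ to satisfy $w_p(x) = x^\tau$; such a point descends to a $\Q$-rational point on the quotient $X_0^+(p) = X_0(p)/\langle w_p \rangle$, and I would appeal to existing descriptions of $X_0^+(p)(\Q)$ for these primes to conclude that the only non-cuspidal points correspond to CM elliptic curves, which one can check are incompatible with the semistability hypothesis over this particular $K$.

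For the converse, for $p \in \{2, 3, 5, 7, 13, 37\}$ I would simply base-change the elliptic curve of Table \ref{TabRat} to $K$. For $p \in \{11, 19\}$, each $X_0(p)$ is an elliptic curve, so I would compute $X_0(p)(K)$, extract a generator $Q$ of the free part of its Mordell--Weil group, and use \texttt{Magma}'s small modular curve package on $mQ$ for small $m$ to produce a semistable $E/K$ with a $K$-rational $p$-isogeny, exactly as in the proof of Theorem \ref{Thm2}. The main obstacle I expect is the ramified primes $p \in \{47, 67, 101\}$: the isogeny-signature machinery of Sections 3 and 4 requires $p$ unramified in $K$, so one is forced to rely on Lemma \ref{exceptional} combined with substantial external input on the $\Q$-rational points of $X_0^+(p)$.
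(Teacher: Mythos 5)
Your Steps 1 and 2 and your treatment of the converse match the paper's proof in substance (one efficiency remark: since $K$ is fixed, the paper does not run through all $1+d(122)$ possibilities for $(n_\mathfrak{q},r)$ as in Theorem \ref{Thm1} but uses the actual splitting behaviour and ideal class orders in $K$; and in Step 2 it turns out that no prime factor $\geq 23$ of $\mathrm{Norm}(\epsilon^{12}-1)$ splits in $K$, so that step is vacuous). The genuine gap is in your handling of the ramified primes $p \in \{47,67,101\}$. You propose to descend a point $x$ with $w_p(x)=x^\tau$ to a rational point on $X_0^+(p)$ and then ``appeal to existing descriptions of $X_0^+(p)(\Q)$'' to see that only CM points occur. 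This fails for two of the three primes: $X_0(47)$ is hyperelliptic with $w_{47}$ equal to the hyperelliptic involution, so $X_0^+(47)\cong \mathbb{P}^1$ and its rational points carry no information; and $X_0^+(101)$ is an elliptic curve of positive rank (this is precisely why $X_0(101)$ has infinitely many quadratic points and appears in the Najman--Vukorepa bielliptic list), so again $X_0^+(101)(\Q)$ is infinite and not a list of cusps and CM points. In neither case can you conclude anything about which quadratic fields the pullbacks live over by looking at the quotient alone.

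The correct object is not the quotient but the quadratic twist: a point $x \in X_0(p)(K)$ with $w_p(x)=x^\tau$ gives a $\Q$-rational point on the twisted curve $X_0^{(d)}(p)$, and one then looks for local obstructions on the twist. The paper shows $X_0^{(d)}(47)(\Q_{101})=\emptyset$ by direct computation, and $X_0^{(d)}(101)(\Q_{67})=\emptyset$ by Ozman's criterion (the prime $67$ ramifies in $K$, is unramified in $\Q(\sqrt{-101})$, and the primes of $\Q(\sqrt{-101})$ above $67$ are non-principal). For $p=67$ the paper instead invokes Box's complete determination of the quadratic points on $X_0(67)$ itself, which shows there are none over real quadratic fields, and rules out the unique non-cuspidal rational point by the twisting/valuation-of-discriminant argument from Corollary \ref{ratcorol}. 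Without replacing your quotient argument by a twist-plus-local-solvability (or complete quadratic-point) argument, the ramified primes are not eliminated and the proof is incomplete.
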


Although the class group of $K$ is large, our quadratic field is now fixed, and we know the splitting behaviour of each auxiliary prime $q$ in $K$, as well as the order of any $\mathfrak{q} \mid q$ in the class group of $K$.

\begin{proof} We start by following Step 1 of the strategy described at the start of this section with the auxiliary primes $3 \leq q \leq 20$, followed by $q=2$. We were able to show that if $p \geq 23 $ with $p \neq 37$, then the isogeny signature of $(E,V_p)$ cannot be constant.

We then proceed with Step 2 and compute $\mathrm{Norm}(\epsilon^{12}-1)$, for the fundamental unit $\epsilon = 13535 + 24 \sqrt{d}$. Although this has several large prime factors, it in fact has no prime factors $\geq 23$ that split in $K$. 

Next, we continue with Step 3. We start by eliminating the prime $p = 17$ like in the proof of Lemma \ref{lem2}. It remains to eliminate the primes that ramify in $K$, namely $ p = 47, 67,$ and $101$. We work directly with the corresponding modular curves $X_0(p)$. By Lemma \ref{exceptional}, each gives rise to a point $x \in X_0(p)(K)$ satisfying $w_p(x) = x^\tau$. By \citep[p.~337]{Boxquad}, the modular curve $X_0(67)$ has a single non-cuspidal rational point, and no points defined over real quadratic fields. Applying the arguments of Corollary \ref{ratcorol} (see also the proof of Lemma \ref{lem2}), the pair $(E,V_p)$ will not give rise to the non-cuspidal rational point, and so we may eliminate $p=67$. Next, we consider $p = 47$. The curve $X_0(47)$ is hyperelliptic and the Atkin--Lehner involution coincides with the hyperelliptic involution. We therefore obtain a rational point on the quadratic twist of $X_0(47)$ by $d$. However, this twisted curve has no points over $\Q_{101}$, and so we obtain a contradiction. Finally, if $p = 101$, then we would obtain a rational point on the twisted modular curve $X^{(d)}_0(101)$ (see \citep[pp.~323--324]{ozmantwist}). In this case we may apply \citep[Theorem~1.1~Part~(5)]{ozmantwist} to obtain a contradiction. To see this, we start by writing $M = \Q(\sqrt{-101})$. The prime $67$ ramifies in $K$, is unramified in $M$, and each prime of $M$ above $67$ is not principal (and therefore not totally split in the Hilbert class field of $M$). This proves that $X_0^{(d)}(101)(\Q_{67}) = \emptyset$.

For the converse, it suffices to write down suitable elliptic curves for $p = 11$ and $p = 19$. We argue exactly as in the proof of Theorem \ref{Thm2} (for $d > 0$) above. The only difference is that we were unable to compute the Mordell--Weil group of $X_0(p)(K)$ directly using \texttt{Magma}. Instead, we find suitable points by first working with the quadratic twist of $X_0(p)$ by $d$.
\end{proof}


\subsection{An imaginary quadratic field}


We consider the imaginary quadratic field $K = \Q(\sqrt{-5})$ which has class number $2$. We note that we cannot obtain a finite list of possible primes in the case that $K$ is imaginary quadratic of class number $1$. This is because, in this case, if the isogeny signature of $(E,V_p)$ is non-constant, then $R_q = 0$ for all primes $q$. This is unsurprising, since if $K$ is any number field that contains the Hilbert class field of an imaginary quadratic field then there are infinitely many primes for which there exist curves which admit $K$-rational $p$-isogenies (see \citep[p.~2]{Banwait} for more details on this).

\begin{proof}[Proof of Theorem \ref{Thm2} for $d = -5$]

If the isogeny signature of $(E,V_p)$ is constant then we may use the proof of Theorem \ref{Thm1} to eliminate all primes $p \geq 23$ with $p \neq 37$. We will therefore focus on the case that the isogeny signature of $(E,V_p)$ is non-constant and proceed with Step 2. As usual, by interchanging $(E,V_p)$ with $(E / V_p,E[p]/V_p)$ if necessary, we may assume that the isogeny signature of $(E,V_p)$ is $(12,0)$. We use the auxiliary primes $3$ and $7$. 
We have that for $p \geq 17$, \[ p \mid  \gcd(R_3 M_3, R_7 M_7), \] and this tells us that $p \in \{17, 43, 71 \}$. The prime $71$ does not split in $K$, so we may eliminate it. Next, we proceed with Step 3 and eliminate the primes $11$, $17$, and $19$ as in the proof of Lemma \ref{lem2}.

For the converse, we must exhibit an appropriate elliptic curve when $p = 43$. The curve $X_0(43)$ is of genus $3$. We start by searching for rational points on the elliptic curve $X_0^+(43)$, and pull them back to try and find a point $x \in X_0(43)(K) \backslash X_0(43)(\Q)$. We were able to do and then using \texttt{Magma} we wrote down a representative elliptic curve $E$ defined over $K$. We found that this curve was \emph{not} semistable at $\mathfrak{p}_0 \mid 43$. However, the quadratic twist of $E$ by a certain element of valuation $1$ at $\mathfrak{p}_0$ has good reduction at both primes of $K$ above $43$, and this twisted elliptic curve will still have a $K$-rational $43$-isogeny.
\end{proof}

\bibliographystyle{plainnat}

\Addresses

\end{document}